\documentclass{article}
\usepackage{graphicx}
\usepackage{setspace}
\usepackage{amsmath}
\usepackage{amssymb,amsfonts,amsbsy}
\usepackage{amsthm}
\usepackage{mathtools}
\usepackage{amscd}
\usepackage{caption}
\usepackage[pdftex]{hyperref}
\usepackage{url}
\usepackage{float}
\usepackage{color}

\newcommand{\R}{\mathbb{R}}
\newcommand{\N}{\mathbb{N}}
\newcommand{\Z}{\mathbb{Z}}
\newcommand{\C}{\mathbb{C}}

\newcommand{\E}{\mathbb{E}}
\newcommand{\Eij}{E_{i_1j_1}\cdots E_{i_kj_k}}
\newcommand{\Tr}{\text{Tr}}

\newcommand{\Id}{\mathrm{Id}}
\newcommand{\la}{\lambda}

\newcommand{\U}{U(\mathfrak{gl}_N)}
\newcommand{\const}{\text{const}}
\newcommand{\ep}{\epsilon}
\newcommand{\ga}{\tau}

\newcommand{\Hom}{\mathrm{Hom}}
\newcommand{\LG}{L^2(G)}
\newcommand{\mW}{\mathcal{W}}

\newcommand{\wt}{\mathrm{wt}}

\newcommand{\bigO}[1]{\mathcal{O}\left({#1}\right)}
\newcommand{\state}[1]{\left\langle {#1} \right\rangle}
\newtheorem{theorem}{Theorem}[section]
\newtheorem{proposition}[theorem]{Proposition}

\newcounter{examplecounter}
\newenvironment{example}{{\noindent\ignorespaces}%
{\par\noindent%
\ignorespacesafterend}
    \refstepcounter{examplecounter}%
  \textbf{Example \arabic{examplecounter}}%
  \quad
}{

}
\newenvironment{remark}{{\noindent\ignorespaces}%
{\par\noindent%
\ignorespacesafterend}
  \textbf{Remark.}%
}{

}

\begin{document}
\title{Three--dimensional Gaussian fluctuations of non--commutative random surfaces along time--like paths}
\author{Jeffrey Kuan}
\maketitle

\abstract{We construct a continuous--time non--commutative random walk on $U(\mathfrak{gl}_N)$ with dilation maps $U(\mathfrak{gl}_N)\rightarrow L^2(U(N))^{\otimes\infty}$. This is an analog of a continuous--time non--commutative random walk on the group von Neumann algebra $vN(U(N))$ constructed in \cite{K}, and is a variant of discrete--time non--commutative random walks on $U(\mathfrak{gl}_N)$ \cite{B,CD}.

It is also shown that when restricting to the Gelfand--Tsetlin subalgebra of $U(\mathfrak{gl}_N),$ the non--commutative random walk matches a (2+1)--dimensional random surface model introduced in \cite{BF}. As an application, it is then proved that the moments converge to an explicit Gaussian field along time--like paths. Combining with \cite{BF} which showed convergence to the Gaussian free field along space--like paths, this computes the entire three--dimensional Gaussian field. In particular, it matches a Gaussian field from eigenvalues of random matrices \cite{Bo}. 

\section{Introduction}
Let us review some results in the mathematical and physics literature in order to motivate the problem. 

The \textit{Anisotropic Kardar--Parisi--Zhang} (AKPZ) equation, which was introduced in \cite{W} and is a variant of the KPZ equation first considered in \cite{KPZ}, describes a
 universal class of random surface growth models. Letting $h(t)$ denote the height of the surface at time $t,$ the equation in two dimensions is
$$
\partial_t h = \nu_x \partial_x^2 h + \nu_y \partial y^2 h + \frac{1}{2}\lambda_x(\partial_x h)^2 + \frac{1}{2}\lambda_y(\partial_y h)^2 + \eta, 
$$
where $\eta$ is space--time white noise and $\lambda_x,\lambda_y$ have different signs. (When $\lambda_x$ and $\lambda_y$ have the same sign, the equation is just the usual KPZ equation in two dimensions). Using non--rigorous methods, it was predicted (e.g. \cite{KS}) that  the stationary distribution for the AKPZ dynamics would be the Gaussian free field (see \cite{S} for a mathematical approach to the Gaussian free field). The question about the full three--dimensional process across different time variables remained open.  

However, the equation is mathematically ill--defined, due to the non--linear term. One mathematical approach is to consider exactly solvable models in the AKPZ universality class.  There have been two models considered, an interacting particle system and the eigenvalue process of a random matrix. Both will be described now. 

The interacting particle system, studied in \cite{BF}, lives on the lattice $\Z\times\Z_+$. It was shown that along \textit{space--like paths}, the particle system is a determinantal point process. (See Theorem \ref{SpaceLikePath} for the definition of space--like paths). By computing the correlation kernel and taking asymptotics, it was shown that the fluctuations of the height function of the particle system indeed converge to the Gaussian free field.  But due to the space--like path restriction, the problem of computing the limiting three--dimensional field remained unsolved. 

The random matrix model looks at the eigenvalues of minors of a large random matrix whose entries are evolving as Ornstein--Uhlenbeck processes. By a combinatorial argument, 
\cite{Bo} was able to compute the limiting three--dimensional Gaussian field, which has the Gaussian free field as a stationary distribution. The asymptotics at the edge were also computed in \cite{So}. However, one drawback is that the eigenvalues are not Markovian, as shown in \cite{ANv}. 

With these two models in mind, it is natural to want to consider an exactly solvable model that ``combines'' both models, and which is both Markovian and allows for the limiting three--dimensional field to be computed. This paper will construct such a model. 

Let us outline the body of the paper.  First, the model will be constructed as a continuous--time non--commutative random walk, which is a non--commutative version of the usual random walk in classical probability. The ``state space'' 
is $U(\mathfrak{gl}_N)$, the universal enveloping algebra of the Lie algebra $\mathfrak{gl}_N$ of $N\times N$ matrices. The dilation maps are algebra homomorphisms 
$j_n:U(\mathfrak{gl}_N)\rightarrow (M^{\otimes \infty},\omega)$, where $M$ is a von Neumann sub--algebra of the $U(\mathfrak{gl}_N)$--module $L^2(U(N))$ and $\omega$ is a state on $M^{\otimes\infty}$. These $j_n$ are a non--commutative analog of the usual definition of a stochastic process as a family of maps  $X_n$ from a probability space $(\Omega,\mathcal{F},\mathbb{P})$ to a state space $S$. It is proved below (Theorem 
\ref{QuantumTheorem}) that there is a semigroup of non--commutative Markov operator $\{P_t\}_{t\geq 0}$ on $U(\mathfrak{gl}_N)$  which is consistent with $j_n$. 

This model is analogous to a previously constructed non--commutative random walk on the group von Neumann algebra $vN(U(N))$ with dilation maps $vN(U(N))\rightarrow vN(U(N))^{\otimes\infty}$ \cite{K}. Additionally, it preserves the states from \cite{BB}. All of these construction involve a continuous family of characters of the infinite--dimensional unitary group $U(\infty)$. There have also been previous non--commutative random walks using the basic representation of $U(N)$ as input \cite{B,CD}.

It also turns out that $P_t$ preserves  $Z:=Z(U(\mathfrak{gl}_N))$, the centre of $U(\mathfrak{gl}_N)$.
This means that  $P_t\Big|_Z$ is a Markov operator in the usual (classical) sense. This Markov operator has a natural
description: By using the Harish--Chandra isomorphism which identifies $Z$ with the ring of shifted
symmetric polynomials in $N$ variables, $P_t$ can be identified with the Markov operator $Q_t$ of an
interacting system of $N$ particles on $\Z$. This is shown in Proposition \ref{OneLevel} below. This
interacting system is known as the \textit{Charlier Process}, see \cite{KOR}. In fact, the projection of
the interacting particle system from \cite{BF} onto $\Z\times\{N\}$ is exactly $Q_t$. When restricting our non--commutative
 random walk to the Gelfand--Tsetlin subalgebra, which is the subalgebra of $U(\mathfrak{gl}_N)$ generated by the centres $Z(U(\mathfrak{gl}_k)), 1\leq k\leq N$,
it also matches the two--dimensional particle system along space--like paths; see Theorem \ref{TwoLevels} for the precise statement.  It is worth mentioning 
that the matching most likely does not hold along time--like paths. 

We then take asymptotics of certain elements of the Gelfand--Tsetlin subalgebra and prove convergence to jointly 
Gaussian random variables. These elements correspond to moments of the random surface. Here, there 
is no requirement that the paths be space--like, allowing for convergence to Gaussians along time--like paths as well. The explicit 
covariance formula is given in Theorem \ref{TimeLikePath}. 

At first glance, it appears to be slightly different from the covariance formula for eigenvalues of random matrices. However, the process 
here corresponds to Brownian motion (see e.g. \cite{Bi2,CD}). Indeed, after applying 
the usual rescaling from Brownian motion to Ornstein--Uhlenbeck, the covariance from \cite{Bo} is 
recovered.

\textbf{Acknowledgments}. The author would like to thank Alexei Borodin, 
Alexey Bufetov, Philippe Biane and Ivan Corwin for enlightening discussions. 

\section{Preliminaries}
Let us review some background about representation theory and non--commutative random walks. See \cite{B2} for 
an introduction to non--commutative random walks.

\subsection{Representation Theory}
The universal enveloping algebra $\U$ is the unital algebra over $\C$ generated by $\{E_{ij},1\leq i,j\leq N\}$ with relations
$E_{ij}E_{kl}-E_{kl}E_{ij}=\delta_{jk}E_{il}-\delta_{il}E_{kj}$. It carries a natural $*$--operation induced from complex conjugation on $\C$. The coproduct
$\Delta: \U\rightarrow\U\otimes\U$ is the algebra morphism sending $E_{ij}$ to $E_{ij}\otimes 1 + 1\otimes E_{ij}$. There is a natural one--to--one correspondence between finite--dimensional $\U$--modules, finite--dimensional Lie algebra representations of $\mathfrak{gl}_N$, and finite--dimensional representations of the Lie group $G:=U(N)$.

Let $L^2(G)$ be the Hilbert space of square--integrable
complex--valued functions on $G$. Recall that by the
Peter--Weyl theorem, this Hilbert space has an orthogonal basis
given by the matrix coefficients of all irreducible
representations of $G$,
i.e.
$$
\{g \mapsto \eta(\pi_{\la}(g)\xi)\},
$$
where $\pi_{\la}$ runs over all irreducible representations of
$G$, $\{\xi\}$ runs over a basis for $V_{\la}$ and $\{\eta\}$
runs overs a basis for $V_{\la}^*$. Denote this basis by
$\{f_{\xi\eta}\}.$ Then there is a non--degenerate pairing
$\langle \cdot,\cdot\rangle$ between $\U$ and $L^2(G)$ given by
$$
\langle X, f_{\xi\eta} \rangle = \eta(X\xi).
$$
This can be heuristically understood as $\langle X,f\rangle = f(X),$ since $f_{\xi\eta}(g)=\eta(g\xi)$. This pairing defines
an injection $\U\hookrightarrow L^2(G)^*$. Let us review the
algebra structure of $L^2(G)^*$.

There is a co--algebra structure on $L^2(G)$ given by the
co--product $\Delta:\LG\rightarrow \LG\otimes\LG \cong
L^2(G\times G)$ defined by $\Delta(f)(x,y)=f(xy)$. The
multiplication $\mu$ on $\LG^*$ is the composition
$$
\LG^*\otimes \LG^* \stackrel{\rho}{\longrightarrow} (\LG\otimes
\LG)^* \stackrel{\Delta^*}{\longrightarrow} \LG^*,
$$
where $\rho(\phi\otimes\psi)(f\otimes h)=\phi(f)\psi(h)$. Use Sweedler's notation to write
$$\Delta(f) = \sum_{(f)} f_{(1)}\otimes f_{(2)}.$$
Evaluating both sides at $(x,y)\in G\times G$ shows 
$$
f(xy)=\sum_{(f)} f_{(1)}(x) f_{(2)}(y) \text{ for all } x,y\in
G.$$
Then
$$
\mu(\phi\otimes\psi)(f)=\Delta^*\rho(\phi\otimes\psi)(f)=\rho(\phi\otimes\psi)(\Delta
f) = \sum_{(f)} \phi(f_{(1)})\psi(f_{(2)}).
$$
In particular, if $\phi_x\in \LG^*$ denotes evaluation at $x$,
i.e. $\phi_x(f)=f(x)$, then
$$
(\phi_x\phi_y)(f) = \sum_{(f)} \phi_x(f_{(1)})\phi_y(f_{(2)}) =
\sum_{(f)} f_{(1)}(x)f_{(2)}(y) = f(xy).
$$
So $\phi_x\phi_y=\phi_{xy}$. We also write $\phi_{X}(\cdot)$
for$\langle X,\cdot\rangle.$

With the pairing between $\U$ and $\LG$ above, define the action $\pi$ of $\U$ on $\LG$ by
$$
\pi(a): f \mapsto \langle \mathrm{id} \otimes a , \Delta f\rangle.
$$
The symbol $\pi$ will sometimes be repressed, in the sense that $af$ means $\pi(a)f$. Observe that $\pi$  preserves each summand in the Peter--Weyl decomposition
$\LG=\bigoplus_{\la} V_{\la}^{(1)}\oplus\cdots\oplus
V_{\la}^{(\dim(\la))}$. To see this, suppose we are given some matrix coefficient in an irreducible representation $V_{\lambda}$, that is, an $f\in \LG$ of the form
$$
f(g) = \langle gv,w\rangle \text{ for fixed } v,w\in V_{\lambda}.
$$
Then by the definition of the co--product
$$
\sum_{(f)} f_{(1)}(g_1)f_{(2)}(g_2) = \langle g_1g_2 v,w\rangle.
$$
Since $\langle X, f_{(2)}\rangle = f_{(2)}(X)$, we see that 
\begin{equation}\label{Compare}
(\pi(X)f)(g) = \langle g\cdot Xv, w\rangle.
\end{equation}
Thus, $\pi(X)$ is of the form $\langle gv', w\rangle$ for $v',w\in V_{\lambda}$, so the summand is preserved. Letting be the von Neumann algebra consisting of the 
elements of $\Hom_{\C}(\LG,\LG)$ which preserve each summand in
the Peter--Weyl decomposition, we have that $\pi$ sends $\U$ to $M$. From the definition of the co--product in $\U$, the $n$--th tensor power $\pi^{\otimes n}:\U\rightarrow M$ 
is defined by
$$
\pi^{\otimes n}(X) = \sum_{i=1}^n \Id^{\otimes i-1}\otimes \pi(X) \otimes \Id^{\otimes n-i}.
$$

In general, any Lie group $G$ acts on its Lie algebra $\mathfrak{g}$ via the adjoint action
$$
\mathrm{Ad}(g)x = gxg^{-1}, \quad g\in G,x\in\mathfrak{g}.
$$
This action extends naturally to $\U$. For a subgroup $K$ of $G$, let
$\U^K=\{x\in\U: \mathrm{Ad}(g)x=x \text{ for all } g\in K\}$. In particular,
$\U^G=Z(\U)$, the centre of $\U$.

Recall that the Harish--Chandra isomorphism identifies $Z(\U)$ with shifted symmetric polynomials.
Explicitly, each $X\in Z(\U)$ acts as some constant $p_X(\la)$ on the irreducible representation 
$V_{\la}$. It turns out that $p_X$ is symmetric in the shifted variables $\la_i-i$. 

\subsection{Non--commutative probability}
A non--commutative probability space $(\mathcal{A},\phi)$ is a unital $^*$--algebra 
$\mathcal{A}$ with identity $1$ and a state $\phi:\mathcal{A}\rightarrow\C$, that is, a linear map such that $\phi(a^*a)\geq 0$ and $\phi(1)=1$. Elements of $\mathcal{A}$ are 
called \textit{non--commutative random variables}. 
This generalises a classical probability space, by considering $\mathcal{A}=L^{\infty}(\Omega,\mathcal{F},\mathbb{P})$ with $\phi(X)=\E_{\mathbb{P}}X$. We also need a notion of convergence. For a 
large parameter $L$ and $a_1,\ldots,a_r\in\mathcal{A},\phi$ which depend on $L$, as well as a limiting space $(\mathbb{A},\Phi)$, we say that $(a_1,\ldots,a_r)$ converges to $\mathbf{(a_1,\ldots,a_r)}$ with respect to the state $\phi$ if
$$
\phi(a_{i_1}^{\epsilon_1}\cdots a_{i_k}^{\epsilon_k})\rightarrow \Phi(\mathbf{a_{i_1}^{\epsilon_1}\cdots a_{i_k}^{\epsilon_k}})
$$
for any $i_1,\ldots,i_k\in\{1,\ldots,r\},\epsilon_j\in\{1,*\}$ and $k\geq 1$.

There is also a non--commutative version of a Markov chain. If $X_n:(\Omega,\mathcal{F},\mathbb{P})\rightarrow E$ denotes the Markov process with transition operator $Q:L^{\infty}(E)\rightarrow L^{\infty}(E)$, then the Markov property is
$$
\E[Yf(X_{n+1})]=\E[Y Qf(X_n)]
$$
for $f\in L^{\infty}(E)$ and $Y$ a $\sigma(X_1,\ldots,X_n)$--measurable random variable. Letting $j_n:L^{\infty}(E)\rightarrow L^{\infty}(\Omega,\mathcal{F},\mathbb{P})$ be defined by $j_n(f)=f(X_n)$, we can write the Markov property as
$$
\E[j_{n+1}(f)Y]=\E[j_n(Qf)Y]
$$
for all $f\in L^{\infty}(E)$ and $Y$ in the subalgebra of $L^{\infty}(\Omega,\mathcal{F},\mathbb{P})$ generated by the images of $j_0,\dots,j_n$. 

Translating into the non--commutative setting, we define a \textit{non--commutative Markov operator} to be a semigroup of
completely positive unital linear maps $\{P_t:t\in T\}$ from a $^*$--algebra $U$ to itself (not necessarily an algebra morphism). The set $T$ indexing time can be either 
$\N$ or $\R_{\geq 0}$, that is to say, the Markov process can be either discrete or continuous time.
If for any times $t_0<t_1<\ldots\in T$ there exists algebra morphisms  $j_{t_n}$ from  $U$ to a non--commutative probability space $(W,\omega)$ such that
$$
\omega(j_{t_n}(f)w)=\omega(j_{t_{n-1}}(P_{t_n-t_{n-1}}f)w)
$$
for all $f\in U$ and $w$ in the subalgebra of $W$ generated by the images of $\{j_t:t\leq t_{n-1}\}$, then $j_t$ is called a \textit{dilation} of $P_t$.

\section{Non--commutative random walk on $\U$}
The first thing that needs to be done is to define states on $\U$. Note that is already has a natural $^*$--algebra structure, 

Given any positive type, normalized (sending the identity to
$1$), class function $\kappa\in \LG$, we have the decomposition
$$\kappa = \sum_{\la\in \widehat{G}}\widehat{\kappa}(\la)\frac{\chi_{\la}}{\dim\la},$$ where
$\widehat{G}$ denotes the set of equivalence classes of irreducible representations of $G$, 
and $\chi_{\la}$ are the characters corresponding to $\la$. By the orthogonality relations, 
$\widehat{\chi_{\la}}(\la)=1$. This
defines a state $\kappa$ on $M$ by
$$
\kappa(X) = \sum_{\la} \widehat{\kappa}(\la)
\sum_{i=1}^{\dim\la} \Tr(X\vert_{V_{\la}^{(i)}}), \quad X\in M.
$$
This naturally pulls back via $\pi:\U\rightarrow M$ to a state $\kappa(\cdot)$ on $\U$. 

Recall an equivalent definition of these states from \cite{BB}. 
There is a canonical isomorphism
$D:U(\mathfrak{gl}(N))\rightarrow \mathcal{D}(N)$ where
$\mathcal{D}(N)$ is the algebra of left--invariant differential
operators on $U(N)$ with complex coefficients. Then the
state $\langle \cdot \rangle_{\kappa}$ on $U(\mathfrak{gl}(N))$
is defined by $$
\langle X\rangle_{\kappa} = D(X)\kappa(U)\vert_{U=I}.
$$
The state can be computed using the formula (see e.g. page 101
of \cite{kn:V}) for $X=E_{i_1j_1}\cdots E_{i_kj_k}$:
\begin{equation}\label{UsingTheFormula}
 D(X)\kappa(U) =
\partial_{t_1}\cdots\partial_{t_k}\kappa\left(U e^{t_1E_{i_1j_1}}\cdots
e^{t_kE_{i_kj_k}}\right)\Big|_{t_1=\cdots=t_k=0}.
\end{equation}
Comparing \eqref{UsingTheFormula} and \eqref{Compare} shows that $D=\pi$.
Here, $e^{tE_{ij}}$ is just the usual exponential of matrices,
which has the simple expression
\begin{equation}\label{SimpleExpression}
e^{tE_{ij}} = 
\begin{cases}
Id + tE_{ij}, \ \ i\neq j\\
Id + (e^t -1)E_{ii} \ \ i=j
\end{cases}
\end{equation}
Note that since \eqref{UsingTheFormula} only involves linear
terms in the $t_j$, one can replace $e^{tE_{ii}}$ with $Id +
tE_{ii}$ without changing the value of the right hand side of
\eqref{UsingTheFormula}.
This is a slightly different approach from \cite{BB}, which used the (equivalent) formula
$$
E_{ij} \mapsto \sum_k x_{ik}\partial_{jk}.
$$

It is not hard to see that the two definitions of $\state{\cdot}_{\kappa}$ are equivalent. For each $\la\in \widehat{G}$ and $X=\Eij$, and 
letting $v_1,\ldots,v_d$ be a basis of $V_{\la}$,
\begin{eqnarray*}
\state{X}_{\chi_{\la}} &=& \partial_{t_1}\cdots\partial_{t_k}\chi_{\la}\left(e^{t_1E_{i_1j_1}}\cdots
e^{t_kE_{i_kj_k}}\right)\Big|_{t_1=\cdots=t_k=0}.\\
&=& \partial_{t_1}\cdots\partial_{t_k}\sum_{r=1}^d \Big\langle e^{t_1E_{i_1j_1}}\cdots
e^{t_kE_{i_kj_k}}v_r,v_r \Big\rangle\Big|_{t_1=\cdots=t_k=0}\\
&=& \sum_{r=1}^d \Big\langle \Eij v_r,v_r \Big\rangle\\
&=& \Tr\left(X\big|_{V_{\la}}\right)
\end{eqnarray*}
By linearity, this holds for all $\kappa$ and all $X$.

Now that the states have been defined, we define the non--commutative Markov process. 
In order to define a continuous--time non--commutative Markov process, there needs to be
a semigroup $\{\kappa_t:t\geq 0\}$ in $\LG$. Indeed, such a semigroup exists: for any $t\geq 0$, let 
$$
\kappa_t(U)=e^{t\Tr(U-\Id)}.
$$
Now fix times $t_1<t_2<\ldots$. Let $\mW$ be the infinite tensor product of von Neumann algebras $M^{\otimes\infty}$
with respect to the state $\omega=\kappa_{t_1}\otimes\kappa_{t_2-t_1}\otimes\kappa_{t_3-t_2}\otimes\ldots$. 
For $n\geq 1$
define the morphism $j_{t_n}:\U\rightarrow\mW$ to be the map
$j_{t_n}(X)=\pi^{\otimes n}(X)\otimes \mathrm{Id}^{\otimes\infty},$
and let $\mW_n$ be the subalgebra generated by the images of
$j_{t_1},\ldots,j_{t_n}$. Define $P_t:\U\rightarrow\U$ by
$(\mathrm{id}\otimes \kappa_t)\circ\Delta$. Note that $P$ is
linear as a map of complex vector spaces, but is not an algebra
morphism, since the trace is not preserved under multiplication
of matrices. 
To simplify notation, write $\state{\cdot}_t$ for $\state{\cdot}_{\kappa_t}$
and $j_n$ for $j_{t_n}$.

\begin{theorem}\label{QuantumTheorem}
(1) The maps $(j_n)$ are a dilation of the
non--commutative Markov operator $P_t$. In other words,
$$
\omega(j_n(X)w)=\omega(j_{n-1}(P_{t_n-t_{n-1}}X)w), \quad X\in
U(\mathfrak{gl}(N)), \quad w\in \mW_{n-1}.
$$

(2) The pullback of $\omega$ under $j_n$ is the state $\langle
\cdot \rangle$ on $U(\mathfrak{gl}(N))$, i.e. $\langle
X\rangle_{t_n} = \omega(j_n(X))$.

(3) For $n\leq m,$ we have
$$
\omega(j_n(X)j_m(Y))=\langle X\cdot P_{t_m-t_n}Y\rangle_{t_n}.
$$

(4) The non--commutative markov operators $P_t$ satisfy the
semi--group property $P_{t+s}=P_{t}\circ
P_{s}$.

(5) For any subgroup $K\subset U(N)$, the restriction of $P_t$ to $\U^K$ is still a
non--commutative transition kernel. In other words, $P_t \U^K\subset \U^K$. 
In particular, $P_t Z(\U)\subset Z(\U)$.
\end{theorem}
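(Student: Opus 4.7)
The plan is to reduce all five parts to two basic identities about the one--parameter family $\{\kappa_t\}$. First, a pointwise semigroup law in $\LG$: from $\kappa_t(U)=e^{t\Tr(U-\Id)}$ one has $\kappa_s\cdot\kappa_t=\kappa_{s+t}$. Second, a Leibniz identity for states pulled back to $\U$: for any two class functions $\kappa,\kappa'\in\LG$ and any $X\in\U$,
\begin{equation}\label{leibniz}
\state{X}_{\kappa\kappa'} = \sum_{(X)} \state{X_{(1)}}_{\kappa}\state{X_{(2)}}_{\kappa'},
\end{equation}
which is immediate from \eqref{UsingTheFormula} together with the coproduct $\Delta E_{ij}=E_{ij}\otimes 1+1\otimes E_{ij}$ (Leibniz rule applied to $D(X)$ on the product $\kappa\kappa'$, evaluated at $U=I$).

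For part (2), I would expand $\pi^{\otimes n}$ as $(\pi\otimes\cdots\otimes\pi)\circ\Delta^{(n-1)}$, so that applying $\omega=\kappa_{t_1}\otimes\kappa_{t_2-t_1}\otimes\cdots$ to $j_n(X)$ yields, in iterated Sweedler notation $\Delta^{(n-1)}X=\sum_{(X)}X^{[1]}\otimes\cdots\otimes X^{[n]}$, the scalar $\sum_{(X)}\state{X^{[1]}}_{\kappa_{t_1}}\cdots\state{X^{[n]}}_{\kappa_{t_n-t_{n-1}}}$; repeatedly applying \eqref{leibniz} and the semigroup law collapses this to $\state{X}_{\kappa_{t_n}}=\state{X}_{t_n}$. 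Part (1) is the same computation with the extra input $w\in\mW_{n-1}$, which is supported only in the first $n-1$ tensor slots; the $n$--th slot of $j_n(X)w$ carries $\pi(X^{[n]})$, which $\kappa_{t_n-t_{n-1}}$ contracts to the scalar $\state{X^{[n]}}_{\kappa_{t_n-t_{n-1}}}$, and by coassociativity the remaining $n-1$ slots reassemble into $j_{n-1}(P_{t_n-t_{n-1}}X)w$. Part (3) is a variation: in $j_n(X)j_m(Y)$ the first $n$ slots pair $\pi^{\otimes n}(X)$ with the first $n$ pieces of $\pi^{\otimes m}(Y)$ and the trailing slots carry $\pi(Y^{[n+1]}),\ldots,\pi(Y^{[m]})$ alone, which the states $\kappa_{t_i-t_{i-1}}$ for $i>n$ contract into $P_{t_m-t_n}Y$ (by the same iterated argument), whereupon part (2) finishes the computation.

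For part (4), coassociativity gives
\begin{equation*}
P_s P_t X = \sum_{(X)} X^{[1]}\state{X^{[2]}}_{\kappa_s}\state{X^{[3]}}_{\kappa_t},
\end{equation*}
and \eqref{leibniz} together with $\kappa_s\kappa_t=\kappa_{s+t}$ reassembles the last two factors into $\state{X_{(2)}}_{\kappa_{s+t}}$, matching $P_{s+t}X=\sum X_{(1)}\state{X_{(2)}}_{\kappa_{s+t}}$. For part (5), the coproduct is $\mathrm{Ad}$--equivariant, $\Delta\circ\mathrm{Ad}(g)=(\mathrm{Ad}(g)\otimes\mathrm{Ad}(g))\circ\Delta$ (check on the generators $E_{ij}$), so for $X\in\U^K$ one has $(\mathrm{Ad}(g)\otimes\mathrm{id})\Delta X=(\mathrm{id}\otimes\mathrm{Ad}(g^{-1}))\Delta X$, giving
\begin{equation*}
\mathrm{Ad}(g)P_tX = (\mathrm{id}\otimes\kappa_t\circ\mathrm{Ad}(g^{-1}))\Delta X.
\end{equation*}
The residual fact $\kappa_t\circ\mathrm{Ad}(g)=\kappa_t$ on $\U$ follows from \eqref{UsingTheFormula} by the class--function property $\kappa_t(gUg^{-1})=\kappa_t(U)$ applied to products $g\,e^{t_1E_{i_1j_1}}\cdots e^{t_kE_{i_kj_k}}g^{-1}$ after taking the derivatives. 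Thus $\mathrm{Ad}(g)P_tX=P_tX$, i.e.\ $P_tX\in\U^K$; specializing to $K=G$ gives $P_tZ(\U)\subset Z(\U)$.

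The step I expect to demand the most care is the bookkeeping in (1) and (3): aligning the iterated Sweedler decomposition $\Delta^{(m-1)}Y$ with the slot structure of $\mW$, and verifying that contracting the trailing slots with the $\kappa_{t_i-t_{i-1}}$ really does produce $P_{t_m-t_n}Y$ on the nose rather than some reordered variant. Once the slot accounting is fixed, parts (2), (4), and (5) are short consequences of \eqref{leibniz}, coassociativity, and the class--function property respectively.
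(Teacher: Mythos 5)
Your proof is correct, and parts (1)--(4) follow essentially the same route as the paper: what you isolate as the Leibniz identity $\state{X}_{\kappa\kappa'}=\sum_{(X)}\state{X_{(1)}}_{\kappa}\state{X_{(2)}}_{\kappa'}$ is exactly what the paper invokes in the proof of (4) (``the general Leibniz rule applied to the derivatives of the product $\kappa_{t_1}\cdot\kappa_{t_2-t_1}$''), and your direct collapse of the iterated Sweedler expansion replaces the paper's slightly more indirect detour through the co--unit $\ep^{(n)}$ and the multiplication map $m_n$ in part (2). Both ways reduce to the semigroup law $\kappa_s\kappa_t=\kappa_{s+t}$ plus coassociativity, so the mathematical content is the same; your organization is arguably cleaner since the one lemma visibly powers (1)--(4).

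Where you genuinely diverge is part (5). The paper only proves the $K=G$ case explicitly (via the tracial property of the state $\state{\cdot}$, showing $P(X)Y=YP(X)$ directly) and defers the general $K$ to a citation of Proposition 4.3 of \cite{CD}. You instead prove the general statement from scratch using $\mathrm{Ad}$--equivariance of $\Delta$ and the invariance $\langle\mathrm{Ad}(g)X\rangle_{\kappa_t}=\langle X\rangle_{\kappa_t}$ coming from the class--function property of $\kappa_t$. This is more general and more structural than the paper's bare--bones argument (and presumably matches what \cite{CD} does). The one small point to be explicit about in a final write--up is that the invariance of $\state{\cdot}_{\kappa_t}$ under $\mathrm{Ad}(g)$ is cleanest when \eqref{UsingTheFormula} is restated in its basis--free form $\langle X_1\cdots X_k\rangle_{\kappa_t}=\partial_{t_1}\cdots\partial_{t_k}\kappa_t(e^{t_1X_1}\cdots e^{t_kX_k})\big|_{t=0}$ for $X_j\in\mathfrak{g}$, after which the conjugation argument is immediate; applying it directly to $E_{ij}$-monomials after expanding $\mathrm{Ad}(g)E_{ij}$ works too but buries the point in linearity.
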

\begin{proof}
(1) This is essentially identical to Proposition 3.1 from \cite{CD}. It is reproduced here
for completeness.
The left--hand--side is 
\begin{multline*}
\omega((\pi^{\otimes n-1}\otimes\pi)\Delta X w) = \sum_{(X)} \omega(\pi^{\otimes n-1}(X_{(1)})\otimes \pi(X_{(2)}) w) \\
= \sum_{(X)} \omega(\pi^{\otimes n-1}(X_{(1)})w) \state{X_{(2)}}_{t_n-t_{n-1}}.
\end{multline*}
The right--hand--side is
$$
\sum_{(X)} \omega(j_{n-1}(\state{X_{(2)}}_{t_n-t_{n-1}}X_{(1)}w) = \sum_{(X)} \omega(j_{n-1}(X_{(1)})w)\state{X_{(2)}}_{t_n-t_{n-1}}.
$$

(2) Let $m_n$ denote the $n$--fold multiplication $\LG^{\otimes
n}\rightarrow\LG$ that sends $f_1\otimes\cdots \otimes f_n$ to
$f_1\cdots f_n$. We will show that
\begin{equation}\label{Eqn1}
m_n(\pi^{\otimes n}(X)(f_1\otimes\cdots\otimes
f_n))=\pi(X)(f_1\cdots f_n).
\end{equation}
The case of general $n$ follows inductively from $n=2$. The left hand side is
$$
\sum_{(X)} (\pi(X^{(1)})f_1) \cdot (\pi(X^{(1)})f_2) = \sum_{(X,f_1,f_2)} \langle X^{(1)}, f_1^{(2)} \rangle f_1^{(1)} \cdot \langle X^{(2)}, f_2^{(2)} \rangle f_2^{(1)}.
$$
The right hand side is
$$
\langle \mathrm{id} \otimes X, \Delta(f_1\cdot f_2) \rangle = \sum_{(f_1,f_2)} \langle X, f_1^{(2)}f_2^{(2)} \rangle f_1^{(1)} \cdot f_2^{(1)}.
$$
So it suffices to show that
$$
\sum_{(X)} \langle X^{(1)}, f_1^{(2)} \rangle \langle X^{(2)}, f_2^{(2)} \rangle = \langle X, f_1^{(2)}f_2^{(2)} \rangle.
$$
But this is just the definition of multiplication in a dual Hopf algebra. So \eqref{Eqn1} is true.

Now recall that if $A$ is a Hopf algebra with co--unit
$\epsilon:A\rightarrow\C$, then the $n$--fold tensor power
$A^{\otimes n}$ is also a Hopf algebra, with co--unit
$\epsilon^{(n)}:A^{\otimes n}\rightarrow \C$ defined by the
composition
$$
A^{\otimes n}\xrightarrow{m_n}A
\stackrel{\epsilon}{\longrightarrow}\C.
$$
In other words,
$$
\epsilon^{(n)}(a_1\otimes \cdots \otimes a_n)=\epsilon(a_1
\cdots a_n) = \epsilon(a_1)\cdots \ep(a_n).
$$
The second equality holds because $\ep$ is a morphism of
$\C$--algebras.

When $A=\LG,$ then $\ep:\LG\rightarrow\C$ is defined by
$\ep(f)=f(Id_G)$. I now claim that for $X\in \U$
\begin{equation}\label{Eqn2}
\omega(\pi^{\otimes n}(X)) =
\ep^{(n)}X(\kappa_{t_n}).
\end{equation}
For $n=1$, this follows immediately from the definitions. For
$n\geq 2$, write as usual
$$
\pi^{\otimes n}(X)=\sum_{(X)} X_{(1)}\otimes\cdots\otimes
X_{(n)} \in M^{\otimes n}.
$$
Then
\begin{eqnarray*}
\omega(\pi^{\otimes n}(X)) &=& \sum_{(X)}
\kappa_{t_1}(X_{(1)})\cdots\kappa_{t_n-t_{n-1}}(X_{(n)}) \\
&=& \sum_{(X)} \ep
X_{(1)}(\kappa_{t_1}) \cdots \ep X_{(n)}(\kappa_{t_n-t_{n-1}}).
\end{eqnarray*}
At the same time, 
\begin{eqnarray*}
\ep^{(n)}X(\kappa_{t_n}) &=& \ep^{(n)}X(\kappa_{t_1}\cdots \kappa_{t_n-t_{n-1}}) \\
&=& \ep^{(n)}\sum_{(X)} X_{(1)}(\kappa_{t_1}) \otimes \cdots \otimes
X_{(n)}(\kappa_{t_n-t_{n-1}})\\
&=& \sum_{(X)} \ep X_{(1)}(\kappa_{t_1}) \cdots \ep
X_{(n)}(\kappa_{t_n-t_{n-1}}).
\end{eqnarray*}
So \eqref{Eqn2} is true.

Finally, we can combine the results to obtain
\begin{align*}
\omega(j_n(X))=\omega(\pi^{\otimes n}(X)) &=
\ep^{(n)}\pi^{\otimes n}X(\kappa_{t_1}\otimes\cdots\otimes \kappa_{t_n-t_{n-1}}) \\
&= \ep m_n(\pi^{\otimes
n}X(\kappa_{t_1}\otimes\cdots\otimes \kappa_{t_n-t_{n-1}}))\\
&=\ep \pi(X)(\kappa_{t_1}\otimes\cdots\otimes \kappa_{t_n-t_{n-1}}) =\langle X\rangle_{\kappa_{t_n}}.
\end{align*}
This is exactly what (2) stated.

(3) By repeated applications of (1),
$$\omega(j_n(X)j_m(Y))=\omega(j_n(X)j_n(P_{t_m-t_n}Y)).$$ Since $j_n$
is a morphism of algebras, this equals $\omega(j_n(X\cdot
P_{t_m-t_n}Y))$, which by (2) equals the right--hand--side of (3).

(4) By linearity, it suffices to prove this for monomials of the form
$E=\Eij$. Introduce some notation: let $K=\{1,\cdots,k\}$ and
for any subset $S\subseteq K$, define
$
E_S = \prod_{s\in S} E_{i_sj_s},
$
where the product is taken in increasing order. The term
$E_{\emptyset}$ is understood to be $1$. So, for example, if
$E=E_{13}E_{42}E_{55}E_{12}$ and $S=\{1,2,4\}$ then
$E_S=E_{13}E_{42}E_{12}$. With this notation,
$$
\Delta E = \sum_{S\subseteq K} E_S\otimes E_{K\backslash S}.
$$
And therefore
\begin{align*}
P_{t_2-t_1}E&=\sum_{S\subseteq K} \langle E_{K\backslash
S}\rangle_{t_2-t_1} E_S,\\
\quad P_{t_1}P_{t_2-t_1}E &=
\sum_{\substack{ S\subseteq K \\ R\subseteq S}}
\state{E_{S\backslash R}}_{t_1} \state{E_{K\backslash
S}}_{t_2-t_1} E_R.
\end{align*}
Since 
$$
P_{t_2}E = \sum_{R\subseteq K}
\state{E_{K\backslash R}}_{t_2}E_R,
$$
it suffices to show 
$$
\state{E_{K\backslash R}}_{t_2} = \sum_{R\subseteq
S\subseteq K} \state{E_{K\backslash S}}_{t_2-t_1}
\state{E_{S\backslash R}}_{t_1} \quad \text{for all }
R\subseteq K,
$$
or equivalently
$$
\state{E_{K}}_{t_2} = \sum_{S\subseteq K}
\state{E_{K\backslash S}}_{t_2-t_1} \state{E_{S}}_{t_1}.
$$
This follows from \eqref{UsingTheFormula} and the general
Leibniz rule applied to the derivatives of the product
$\kappa_{t_1}\cdot\kappa_{t_2-t_1}$.

%Let us perform a sanity check. The proposition implies that
%$\langle PX\rangle_{\kappa^{n-1}} = \langle
%X\rangle_{\kappa^n}.$ If $X$ is group--like, i.e.
%$\Delta(X)=X\otimes X,$ then $\kappa(X)\langle
%X\rangle_{\kappa^{n-1}}= \langle X\rangle_{\kappa^n}.$ So
%$\langle X\rangle_{\kappa^n}=\kappa^n(X)$ which is certainly
%true. If $X$ is primitive, i.e. $\Delta(X)=Id\otimes
%X+X\otimesId,$ then
%$$
%\langle X\rangle_{\kappa^n}= \langle \kappa(X)Id +
%\kappa(Id)X\rangle_{\kappa^{n-1}} = \kappa(X) + \langle
%X\rangle_{\kappa^{n-1}} = \cdots = n\kappa(X).
%$$
%But this must also equal $\kappa^n(X)$, so $\kappa(X)=0$.
%{\color{red} This can also be seen by using Engel's theorem and
%that all primitive elements are nilpotent.} In fact, this would
%also imply %that $PX=X$ for primitive $X$ so that $\langle
%X\rangle_{\kappa^n} = \langle X\rangle_{\kappa}$, implying that
%$\kappa(X)^n=\kappa(X)$ for all $n,$ implying that
%$\kappa(X)=0$or $1$.

(5) This is Proposition 4.3 from \cite{CD}. Here is also a bare--bones proof when $K=G$
Let $X\in Z(\U).$ The goal is to show that $P(X)Y=YP(X)$
for all $Y\in \U$. It suffices to show this when
$Y\in\mathfrak{g}$. In this case,
\begin{align*}
&XY=YX \Longrightarrow \Delta(XY)=\Delta(YX) \Longrightarrow
\Delta(X)\Delta(Y)=\Delta(Y)\Delta(X)\\
&\Longrightarrow \sum_{(X)} X_{(1)}Y\otimes X_{(2)} +
X_{(1)}\otimes X_{(2)}Y = \sum_{(X)} YX_{(1)}\otimes X_{(2)} +
X_{(1)}\otimes YX_{(2)}.
\end{align*}
Now apply the linear map $id \otimes \state{\cdot}$ to both
sides to get
$$
\sum_{(X)} \state{X_{(2)}}X_{(1)}Y + \state{X_{(2)}Y}X_{(1)} =
\sum_{(X)} \state{X_{(2)}}YX_{(1)} + \state{YX_{(2)}}X_{(1)}.
$$
Since the state $\state{\cdot}$ is tracial, the second summand
on both sides are equal. The first summand on the
left--hand--side is $P(X)Y$ while the first summand on the
right--hand--side is $YP(X)$, so $P(X)\in Z(\U)$ as needed.
\end{proof}

\section{Connections to classical probability}
In this section, we will show that restricting to the centres $Z(U(\mathfrak{gl}_1),$ $\ldots,Z(U(\mathfrak{gl}_N))$ reduces the non--commutative random walk to a (2+1)--dimensional random surface growth model. First, here is a description of the model, which was introduced in \cite{BF}.
\subsection{Random surface growth}
Consider the two--dimensional lattice $\Z\times\Z_+$. On each horizontal level $\Z\times\{n\}$ there are
exactly $n$ particles, with at most one particle at each lattice site. Let $X^{(n)}_1>\ldots>X^{(n)}_n$ 
denote the $x$--coordinates of the locations of the $n$ particles. Additionally, the particles need to 
satisfy the \textit{interlacting property} $X^{(n+1)}_{i+1} < X^{(n)}_i \leq X^{(n+1)}_i.$  
The particles can be viewed as a random stepped surface, see Figure \ref{Figure}. This can be made rigorous by defining the height function at $(x,n)$ to be the number of particles to the right of $(x,n)$.

\begin{figure}[H]
\captionsetup{width=0.8\textwidth}
\centering

\caption{ The particles as a stepped surface.  The lattice is shifted to make the visualization easier.}

\includegraphics[height=5cm]{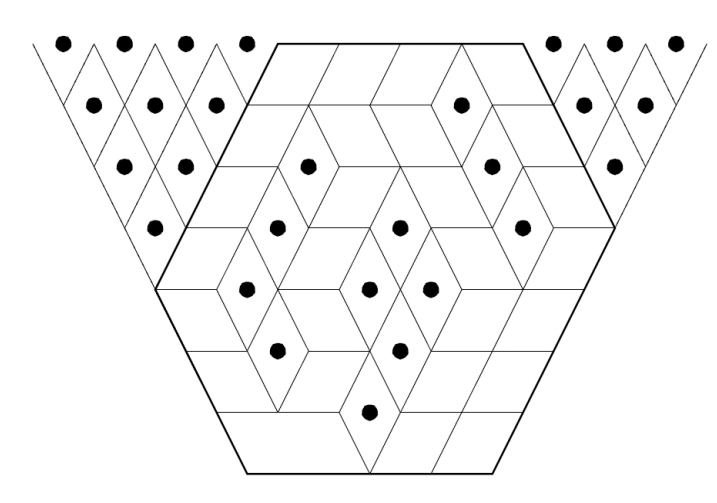}

\label{Figure}
\end{figure}

The dynamics on the particles are as follows. The initial condition is the \textit{densely packed} initial
condition, $\Lambda_i^{(n)}=-i+1,1\leq i\leq n$. Each particle has a clock with exponential waiting time of 
rate $1$, with all clocks independent of each other. When the clock rings, the particle attempts to jump
one step to the right. However, it must maintain the interlacing property. This is done by having  
particles push particles above it, and jumps are blocked by particles below it. One can think of lower
particles as being more massive. See Figure \ref{Jumps} for an example.

The projection to $\Z\times\{n\}$ is still Markovian, and is known as the \textit{Charlier process}
\cite{KOR}. It can be described by as a continuous--time Markov chain on $\Z^n$ with independent increments 
$e_i/n,1\leq i\leq n$, (where $\{e_i\}$ is the canonical basis for $\Z^n$) conditioned to stay in the Weyl chamber $(x_1>x_2>\ldots>x_n)$. Equivalently, the
conditioned Markov chain is the Doob $h$--transform for some harmonic function $h$. There is a nice
description of $h$ in terms of representation theory, namely, $h(x_1,\ldots,x_n)$ is the dimension of the
irreducible representation of $\mathfrak{gl}_n$ with highest weight $(x_1,x_2+1,\ldots,x_n+n-1)$. Explicitly, $$
\dim\la = \prod_{i<j} \frac{\lambda_i-i-(\lambda_j-j)}{j-i}.
$$
Below, let $Q_t^{(N)}$ denote the Markov operator of this Markov chain.

\begin{figure}
\captionsetup{width=0.8\textwidth}
\caption{The red particle makes a jump. If any of the black particles attempt to jump, their jump is 
blocked by the particle below and to the right, and nothing happens. White particles are not blocked.}
\centering
\includegraphics[height=6cm]{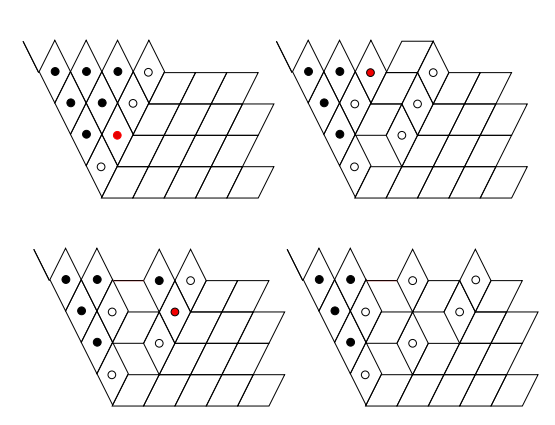}

\label{Jumps}
\end{figure}

The construction of the full particle system is based on a general multi--variate construction from
\cite{BF}, which is based on \cite{DF}. Suppose there are two Markov chains with state spaces
$\mathcal{S}, \mathcal{S}^*$ and transition probabilities $P,P^*$. Also assume there is a Markov operator
$\Lambda:\mathcal{S}^*\rightarrow\mathcal{S}$ which intertwines with $P,P^*$ in the sense that $\Lambda P^* = P\Lambda$.
In other words, there is a commutative diagram

\begin{equation}\label{CD}
\begin{CD}
\mathcal{S}^* & @>P^*>> & \mathcal{S}^*\\
 @VV\Lambda V &    & @VV\Lambda V\\
\mathcal{S} & @>P>> & \mathcal{S}
\end{CD}
\end{equation}

\vspace*{1\baselineskip}

Then the state space is $\{(x^*,x)\in \mathcal{S}^*\times\mathcal{S}: \Lambda(x^*,x)\neq 0\}$ with transition probabilities
$$
\mathrm{Prob}((x^*,x)\rightarrow (y^*,y)) = 
\begin{cases}
\frac{P(x,y)P^*(x^*,y^*)\Lambda(y^*,y)}{\Delta(x^*,y)},& \quad \Delta(x^*,y)\neq 0\\
0,& \quad \Delta(x^*,y)=0
\end{cases}
$$
Additionally, if the intial condition is a \textit{Gibbs measure}, that is, a probability distribution of the form $\mathbb{P}(x^*)\Lambda(x^*,x)$, then the dynamics preserves Gibbs measures. All constructions and definitions extend naturally to any finite number of Markov chains. 

Here, $Q_t^{(N)}$ and $Q_t^{(N-1)}$ will play the roles of $P^*,P$, and the projection $\Lambda$ is 
$$
\Lambda(x_1>\ldots>x_N,y_1>\ldots>y_{N-1}) = \frac{h(y)}{h(x)}.
$$
The construction implies that
\begin{equation}\label{Gibbs}
\mathbb{P}(X^{(N)}(t) = x^{(N)} \vert X^{(M)}(t) = x^{(M)}) = \frac{h(x^{(N)})}{h(x^{(M)})} , \forall N\leq M, t\geq 0
\end{equation}
\begin{multline}\label{Push}
\mathbb{P}(X^{(N)}(t) = x^{(N)} \vert X^{(M)}(s) = y^{(M)},X^{(N)}(s) = y^{(N)}) \\
= \mathbb{P}(X^{(N)}(t) = x^{(N)} \vert X^{(N)}(s) = y^{(N)}), \forall N\leq M,s\leq t
\end{multline}
The intuition behind \eqref{Push} is that since particles on lower levels push and block the particles on higher levels, the evolution of the $N$--th level is independent of the evolution $M$--th level. Equation \eqref{Gibbs} is a mathematical formulation of the statement that the dynamics preserves Gibbs measures.

\subsection{Restriction to centre}
Before continuing, we need to compute the states of certain
observables.

\begin{proposition}\label{Formula}
Let $\Pi$ denote the set of partitions of the set $\{1,\ldots,m\}$, let $\left| \pi\right|$
denote the number of blocks of the partition $\pi\in \Pi$ and let $B\in\pi$ mean that $B$ is 
a block in $\pi$. Then
$$
\langle E_{i_1j_1}\cdots E_{i_mj_m}\rangle_t = \sum_{\pi \in\Pi} 
t^{\left| \pi\right|}\prod_{\substack{ B\in\pi \\ B=\{b_1\ldots,b_k \}}}
1_{j_{b_1}=i_{b_2},j_{b_2}=i_{b_3},\ldots,j_{b_k}=i_{b_1}}
$$
\end{proposition}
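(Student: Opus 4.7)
The plan is to compute the state $\langle E_{i_1j_1}\cdots E_{i_mj_m}\rangle_t$ directly from the differential-operator formula \eqref{UsingTheFormula}. Specializing $\kappa = \kappa_t$ and evaluating at $U = I$ gives
$$
\langle E_{i_1j_1}\cdots E_{i_mj_m}\rangle_t = \partial_{s_1}\cdots\partial_{s_m}\Bigl[\exp\bigl(t\,\Tr(e^{s_1E_{i_1j_1}}\cdots e^{s_mE_{i_mj_m}} - I)\bigr)\Bigr]\Big|_{s_1=\cdots=s_m=0}.
$$
So the task reduces to taking $m$ mixed partials at the origin of an exponential of a single scalar function $F(s_1,\ldots,s_m) = \Tr(e^{s_1E_{i_1j_1}}\cdots e^{s_mE_{i_mj_m}} - I)$.

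I would then invoke the multivariate Faà di Bruno identity for $e^{tF}$: for distinct variables $s_1,\ldots,s_m$,
$$
\partial_{s_1}\cdots\partial_{s_m}\, e^{tF}\Big|_{0} \;=\; \sum_{\pi\in\Pi} t^{|\pi|}\prod_{B\in\pi} \Bigl(\partial_B F\Bigr)\Big|_0,
$$
where $\partial_B = \prod_{b\in B}\partial_{s_b}$. This is standard and follows by induction on $m$ from the product rule applied to $\partial_{s_m}(e^{tF}) = t(\partial_{s_m}F)e^{tF}$. The factor $t^{|\pi|}$ arises because each block of the partition contributes exactly one factor of $t$ from the derivative of the exponential, and the constant term of $e^{tF}$ at $0$ equals $1$ since $F(0)=0$.

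Next I compute the block contribution $\partial_B F|_0$ for $B = \{b_1<\cdots<b_k\}$. Since only a single derivative is taken in each $s_b$, in the expansion $e^{s_bE_{i_bj_b}} = I + s_bE_{i_bj_b} + O(s_b^2)$, only the linear term contributes for $b\in B$ and only the identity term contributes for $b\notin B$. Hence
$$
\partial_B F\big|_0 \;=\; \Tr\bigl(E_{i_{b_1}j_{b_1}}\cdots E_{i_{b_k}j_{b_k}}\bigr).
$$
Using the matrix identity $E_{ab}E_{cd} = \delta_{bc}E_{ad}$, one collapses this product to $\delta_{j_{b_1}i_{b_2}}\delta_{j_{b_2}i_{b_3}}\cdots\delta_{j_{b_{k-1}}i_{b_k}} E_{i_{b_1}j_{b_k}}$, whose trace equals $1_{j_{b_1}=i_{b_2},\,j_{b_2}=i_{b_3},\,\ldots,\,j_{b_k}=i_{b_1}}$. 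Substituting back into the Faà di Bruno expansion yields the claimed formula.

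The main obstacle is essentially bookkeeping: keeping the order of the factors in each block consistent with the order in the monomial so that the cyclic indicator comes out correctly (as opposed to a symmetric one). All other ingredients — the matrix identity for products of $E_{ij}$, the linearization of the exponentials, and Faà di Bruno — are elementary.
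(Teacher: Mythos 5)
Your proof is correct and follows essentially the same route as the paper's: both apply the Faà di Bruno expansion to $\exp\bigl(t\,\Tr(e^{s_1E_{i_1j_1}}\cdots e^{s_mE_{i_mj_m}}-I)\bigr)$, linearize each exponential factor, and reduce each block contribution to the trace of a product of matrix units, which gives the cyclic indicator. The only cosmetic difference is that you spell out the collapse $E_{ab}E_{cd}=\delta_{bc}E_{ad}$ and flag the ordering issue explicitly, which the paper leaves implicit.
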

\begin{example}
$$
\state{E_{21}E_{12}E_{21}E_{12}}_t = 2t^2+t
$$
with two contributing partitions having two blocks: $\{1,2\}\cup\{3,4\},\{1,4\}\cup\{2,3\}$, 
and one contributing partition having one block $\{1,2,3,4\}$.
\end{example}

\begin{example} 
$$
\state{E_{11}^3E_{22}}_t=t^4+3t^3+t^2
$$
with one contributing partition having four blocks:$\{1\}\cup\{2\}\cup\{3\}\cup\{4\},$
three contributing partitions having three blocks: $\{1,2\}\cup\{3\}\cup\{4\},\{1,3\}\cup\{2\}\cup\{4\},\{2,3\}\cup\{1\}\cup\{4\}, $
and one contributing partition having two blocks: $\{1,2,3\}\cup\{4\}.$
\end{example}

\begin{example} For any $m$,
$$
\state{E_{jj}^m}_t = B_m(t)
$$
where $B_m(t)$ is the $m$--th Bell polynomial. These are also the moments of a Poisson 
random variable with mean $t$, so under the state $\state{\cdot}_t$, each $E_{jj}$ can be
heuristically understood to be distributed as Poisson(t).
\end{example}

\begin{example} $$
\state{E_{11}E_{12}}_t=0
$$
with no contributing partitions.
\end{example}

\begin{proof}
Recall the defintion of $\state{\cdot}_t$ in \eqref{UsingTheFormula}. By Faa di Bruno formula, 
$$
\langle E_{i_1j_1}\cdots E_{i_mj_m}\rangle_t = \sum_{\pi\in \Pi} f^{(|\pi|)}(y)\prod_{B\in\pi}\frac{\partial^{|B|}y}{\prod_{b\in B}\partial x_b}\Bigg|_{x_1=\cdots=x_m=0}
$$
where
$$
f(y)=e^{ty}, \quad y = Tr(e^{x_1E_{i_1j_1}}\cdots e^{x_mE_{i_mj_m}}-Id).
$$
Note that
$$
f^{(|\pi|)}(y)\Bigg|_{x_1=\cdots=x_m=0} = t^{|\pi|}f(y)\Bigg|_{y=0}=t^{|\pi|}.
$$
Since we are taking the derivative with respect to $x_b$ and setting equal to $0$, 
we only need the linear terms in $x_b$, so it is equivalent to replace $y$ with 
$$
y = Tr((Id+x_1E_{i_1j_1})\cdots (Id+ x_mE_{i_mj_m})-Id).
$$
Here, $E_{ij}$ are the usual $N\times N$ matrices acting on $\C^N$, not the generators of $\U$.
Expanding the parantheses, all terms other than $Tr\left(\prod_{b\in B} x_bE_{i_bj_b}\right)$ do not contribute,
since these do not survive differentiation with respect to $x_b,b\in B$. Finally, since
$$
Tr\left(\prod_{\substack{ b\in B \\ B=\{b_1\ldots,b_k \}}}  E_{i_bj_b}\right)=
1_{j_{b_1}=i_{b_2},j_{b_2}=i_{b_3},\ldots,j_{b_k}=i_{b_1}},
$$
the proof is finished.
\end{proof}

In section 7 of  \cite{GKLLRT}, explicit generators of the centre $Z(\U)$ were found. See also chapter 7 of \cite{kn:M} for an exposition. 

Let $\mathcal{G}_m$ denote the directed graph with vertices and edges
$$
\{1,\ldots,m\} \quad \{(i,j):1\leq i,j\leq m\}. 
$$
Let $\Pi_k^{(m)}$ denote the set of all paths in $\mathcal{G}_m$ of length $k$ which start and 
end at the vertex $m$. For $\pi\in \Pi_k^{(m)}$ let $r(\pi)$ denote the length of the first
return to $m$. Let $E(\pi)\in U(\mathfrak{gl}_m)$ denote the element with coefficient $r(\pi)$ obtained by taking
the product when labeling
the edge $(i,j)$ with $E_{ij}$ when $i\neq j$, and the edge $(i,i)$ with $E_{ii}-m+1$.
For example, the path 
$$
\pi=\{5\rightarrow 3\rightarrow 3\rightarrow 1\rightarrow 5\rightarrow 5\rightarrow 2\rightarrow 5\}
$$
is in $\Pi^{(5)}_7$ with $r(\pi)=4$ and 
$$
E(\pi)=4E_{53}(E_{33}-4)E_{31}E_{15}(E_{55}-4)E_{52}E_{25}.
$$
Define the elements
$$
\Psi_k := \sum_{m=1}^N \sum_{\pi\in \Pi_k^{(m)}} E(\pi) \in U(\mathfrak{gl}_N).
$$
For example,
$$
\Psi_1 = \sum_{m=1}^N (E_{mm}-m+1), \quad \Psi_2 = \sum_{m=1}^N (E_{mm}-m+1)^2 + 2\sum_{1\leq l<m\leq N} E_{ml}E_{lm}.
$$
When we wish to emphasize that $\Psi_k\in U(\mathfrak{gl}_N)$, the notation $\Psi_{k}^{(N)}$ will be used.
\footnote{Caution: This notation is consistent with notation from integrable probability but different from
notation in representation theory.} 

\begin{theorem}\label{Gelfand} \cite{GKLLRT}
The centre $Z(\U)$ is generated by the elements $1,\{\Psi_k\}_{k\geq 1}$. Furthermore, the Harish--Chandra isomorphism maps $\Psi_k$ 
to the shifted symmetric polynomial $\sum_{m=1}^N (\la_m-m+1)^k$.
\end{theorem}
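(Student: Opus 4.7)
The plan is to establish the theorem in three steps: centrality of each $\Psi_k$, identification of its Harish--Chandra image, and the generation statement.

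First, I would show that each $\Psi_k$ lies in $Z(\U)$ by verifying $[\Psi_k,E_{ab}]=0$ for every generator $E_{ab}$. Expanding using the commutation relation $[E_{ij},E_{ab}]=\delta_{ja}E_{ib}-\delta_{ib}E_{aj}$, the commutator rewrites as a sum over ``modified'' paths in $\mathcal{G}_m$ obtained by cutting an edge at its source or target and rerouting through $a$ or $b$. These two families of modified paths are in bijection with opposite signs, giving cancellation; the diagonal shift $E_{mm}-m+1$ and the first-return coefficient $r(\pi)$ are precisely calibrated so that the would-be boundary terms from paths passing through the distinguished vertex $m$ also cancel.

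Second, I would compute the Harish--Chandra image by evaluating $\Psi_k$ on a highest weight vector $v_\la$ with $E_{ii}v_\la=\la_iv_\la$ and $E_{ij}v_\la=0$ for $i<j$. Since $\Psi_k$ is central, its action on $v_\la$ is scalar, so it is enough to normal-order each $E(\pi)$, pushing all raising operators to the right where they annihilate $v_\la$. After the dust settles, the only paths in $\Pi_k^{(m)}$ with a nonzero net contribution are the trivial ones $m\to m\to\cdots\to m$ (for which $r(\pi)=1$), whose image acts as $(E_{mm}-m+1)^k$. All nontrivial paths produce contributions that cancel in pairs or in cycle-like families, again thanks to the $r(\pi)$ weighting interacting with the combinatorics of excursions away from $m$. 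Evaluating on $v_\la$ then yields $\sum_{m=1}^N(\la_m-m+1)^k$.

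Third, the generation claim follows from the Harish--Chandra isomorphism together with a standard fact about symmetric functions: the shifted power sums $p_k(\la)=\sum_m(\la_m-m+1)^k$ generate the ring of shifted symmetric polynomials over $\C$, in the same way ordinary power sums generate ordinary symmetric polynomials. Transported back through the isomorphism, this shows that $\{1,\Psi_k\}_{k\geq 1}$ generates $Z(\U)$.

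The main obstacle is the cancellation analysis in Step 2: while it is transparent which straight-path contributions produce the asserted diagonal answer, showing that \emph{all} nontrivial paths cancel requires a careful bookkeeping of how $r(\pi)$ weights excursions away from $m$. As an alternative, one can invoke the noncommutative symmetric function framework of \cite{GKLLRT}, where $\Psi_k$ arises as the image of a canonical noncommutative power sum under a natural realization map, making centrality and the Harish--Chandra image both manifest and reducing the present theorem to a citation.
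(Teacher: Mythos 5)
The paper does not actually prove this statement; it cites \cite{GKLLRT} and follows the theorem only with a remark sketching the Harish--Chandra image via the projection $\U\to U(\mathfrak{h})$ along $\mathfrak{n}_-\U+\U\mathfrak{n}_+$. Your fallback at the end of the proposal --- reduce the theorem to the GKLLRT citation --- is exactly what the paper does, so at that level the two match.

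Your attempted direct argument has a genuine error in Step 2. You identify as the main obstacle ``showing that all nontrivial paths cancel,'' and attribute this to ``the $r(\pi)$ weighting interacting with the combinatorics of excursions away from $m$.'' In fact no cancellation between paths occurs, and $r(\pi)$ plays no role at all in the Harish--Chandra computation: every nontrivial path individually annihilates the highest weight vector. A path $\pi\in\Pi_k^{(m)}$ lives in $\mathcal{G}_m$, whose vertex set is $\{1,\ldots,m\}$. If $\pi$ is not the constant path at $m$, its rightmost off-diagonal edge is $(l,m)$ with $l<m$, so the rightmost off-diagonal factor of $E(\pi)$ is $E_{lm}\in\mathfrak{n}_+$. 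Commuting $E_{lm}$ past the trailing diagonal factors only shifts constants (since $[E_{lm},E_{mm}]=E_{lm}$), after which $E_{lm}v_\la=0$. Hence $E(\pi)v_\la=0$ termwise for every nontrivial $\pi$, and only the constant path at each $m$ contributes, giving $(\la_m-m+1)^k$. Equivalently, and as the paper's remark states, each nontrivial $E(\pi)$ lies in $\U\mathfrak{n}_+$ and is killed by the Harish--Chandra projection. Your Step 2 is thus essentially trivial once one observes where the last off-diagonal edge points; there is no intricate bookkeeping to do there.

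The genuinely hard part is your Step 1, centrality, where the first-return coefficient $r(\pi)$ and the shift $-m+1$ do matter, and your sketch (``a bijection with opposite signs, \ldots precisely calibrated'') is not carried out. That is the content of \cite{GKLLRT}; reducing it to a one-paragraph bijection would require substantially more work than you indicate. Your Step 3 is correct and standard: given centrality and the Harish--Chandra image, generation follows since the shifted power sums generate the ring of shifted symmetric polynomials and the Harish--Chandra map is an isomorphism on $Z(\U)$.
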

\begin{remark} Writing $\mathfrak{gl}_N = \mathfrak{n}_- \oplus \mathfrak{h} \oplus \mathfrak{n}_+$ 
where $\mathfrak{n}_+,\mathfrak{n}_-$ are the upper and lower nilpotent subalgebras and 
$\mathfrak{h}$ is the diagonal subalgebra, the Harish--Chandra homomorphism is the projection
$$
\U = (\mathfrak{n}_-\U + \U\mathfrak{n}_+)\oplus U(\mathfrak{h})\rightarrow U(\mathfrak{h})=S(\mathfrak{h})=\C[\la_1,\ldots,\la_N].
$$
This sends
$$
\Psi_k = \sum_{m=1}^N (E_{mm}-m+1)^k + (\text{other terms}) \mapsto \sum_{m=1}^N (E_{mm}-m+1)^k = \sum_{m=1}^N (\la_m-m+1)^k.
$$
Of course, $\sum_{m=1}^N (E_{mm}-m+1)^k$ is in general not central.
\end{remark}

Now it is time to explicitly state the relationship between the non--commutative random walk and the growing stepped surface. One may be tempted to think that 

$$
\begin{CD}
\U & @>P_t>> & \U\\
 @AAA &    & @AAA\\
U(\mathfrak{gl}_{N-1}) & @>P_t>> & U(\mathfrak{gl}_{N-1})
\end{CD}
$$

\vspace*{1\baselineskip}

\noindent is a non--commutative version of \eqref{CD}. However, care needs to be taken because the inclusion map
does not send $Z(U(\mathfrak{gl}_{N-1}))$ to $Z(\U)$.

A slight change of variables will make statements cleaner. If $p(\lambda)$ is a shifted symmetric 
polynomial, then by definition it is symmetric in the variables $x_i=\lambda_i-i+1$, and 
let $\bar{p}(x)$ denote the corresponding symmetric polynomial.

\begin{proposition} If $Y\in Z(\U)$ is sent to the symmetric polynomial $p_Y(x)$ by the Harish--Chandra
isomorphism, then
$$
\state{Y}_t = \E \left[\bar{p}_Y(X^{(N)}_1(t),\ldots,X^{(N)}_N(t))\right].
$$
\end{proposition}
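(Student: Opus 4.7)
The strategy is to rewrite both sides as the pairing of $\bar p_Y$ (equivalently $p_Y$) against a single probability measure on the signatures of $U(N)$, and then identify those two measures. On the left-hand side, I would first expand the class function $\kappa_t(U)=e^{-tN}e^{t\Tr(U)}$ in irreducible characters. A power-series expansion together with Schur--Weyl duality $\Tr(U)^k=\sum_{\lambda\vdash k,\ \ell(\lambda)\le N}\dim(S_\lambda)\chi_\lambda(U)$ gives
$$
\kappa_t\;=\;\sum_\lambda\mu_t(\lambda)\,\frac{\chi_\lambda}{\dim V_\lambda},\qquad \mu_t(\lambda)\;:=\;e^{-tN}\,\frac{t^{|\lambda|}\dim(S_\lambda)\dim(V_\lambda)}{|\lambda|!},
$$
and the identity $\sum_{\lambda\vdash k,\,\ell(\lambda)\le N}\dim(S_\lambda)\dim(V_\lambda)=N^k$ confirms that $\mu_t$ is a probability measure on signatures with at most $N$ rows.

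Since $Y\in Z(\U)$ acts on $V_\lambda$ by the scalar $p_Y(\lambda)$ (the defining property of the Harish--Chandra isomorphism), the example computation already carried out in the preliminaries gives $\state{Y}_{\chi_\lambda/\dim V_\lambda}=p_Y(\lambda)$, and linearity in $\kappa$ upgrades this to $\state{Y}_t=\sum_\lambda\mu_t(\lambda)\,p_Y(\lambda)$. For the right-hand side, I would use that the Charlier process is the Doob $h$-transform of $N$ independent rate-$1$ Poisson random walks conditioned to stay in the Weyl chamber, with harmonic function $h(x)=\dim V_\lambda$ under $x_i=\lambda_i-i+1$. Since the densely packed initial condition corresponds to $\lambda=\emptyset$ and $h=1$, the Karlin--McGregor formula yields
$$
\Prob\bigl(X^{(N)}(t)=x\bigr)\;=\;\dim(V_\lambda)\cdot e^{-tN}\det\!\left[\frac{t^{x_i+j-1}}{(x_i+j-1)!}\right]_{i,j=1}^N.
$$
Factoring the scalar $t^{\sum_i x_i+N(N-1)/2}=t^{|\lambda|}$ out of the determinant and applying the determinantal identity $\det[1/(x_i+j-1)!]_{i,j=1}^N=\dim(S_\lambda)/|\lambda|!$ reduces this to exactly $\mu_t(\lambda)$.

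Combining the two computations and using $p_Y(\lambda)=\bar p_Y(x)$ under $x_i=\lambda_i-i+1$, I obtain
$$
\state{Y}_t\;=\;\sum_\lambda\mu_t(\lambda)\,p_Y(\lambda)\;=\;\E\bigl[\bar p_Y(X^{(N)}(t))\bigr].
$$
The only step requiring genuine work is the identification of $\mu_t$ with the Charlier distribution: one must combine Karlin--McGregor with the determinantal formula for $\dim(S_\lambda)$ and track the powers of $t$ carefully. Everything else is pure bookkeeping --- the character expansion of $\kappa_t$, the Harish--Chandra description of central elements acting on $V_\lambda$, and the change of variables implicit in $\bar p_Y$.
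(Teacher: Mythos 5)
Your proof is correct and follows the same overall skeleton as the paper's: expand $\kappa_t=e^{t\Tr(U-\Id)}$ into irreducible characters with coefficients forming a probability measure $\mu_t$ on signatures, use that $Y$ acts on $V_\lambda$ as $p_Y(\lambda)\Id$ (so $\state{Y}_t=\sum_\lambda\mu_t(\lambda)p_Y(\lambda)$), and identify $\mu_t$ with the law of $X^{(N)}(t)$. The one place where you diverge is how the identification of $\mu_t$ with the Charlier distribution is obtained. The paper simply cites a known result from \cite{BK} for the identity
$$
e^{t\Tr(U-\Id)}=\sum_\lambda\Prob\bigl(X_i^{(N)}(t)=\lambda_i-i+1\bigr)\frac{\chi_\lambda(U)}{\dim\lambda},
$$
whereas you rederive it from scratch: Schur--Weyl duality gives the character expansion with explicit coefficients $\mu_t(\lambda)=e^{-tN}t^{|\lambda|}\dim(S_\lambda)\dim(V_\lambda)/|\lambda|!$, and then Karlin--McGregor for the $h$-transformed Poisson walk plus the Frobenius determinantal formula $\dim(S_\lambda)=|\lambda|!\det[1/(\lambda_i-i+j)!]$ shows that this equals the Charlier transition probability from the packed configuration. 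That extra work buys self-containedness at the cost of length; the bookkeeping (factoring $t^{|\lambda|}$ out of the determinant, checking normalization via $\sum_{\lambda\vdash k}\dim(S_\lambda)\dim(V_\lambda)=N^k$) is all correct. Nothing is missing, and the two proofs are otherwise identical in structure.
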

\begin{proof}
This is not new, see \cite{BB}, but the proof is similar to 
Theorem \ref{TwoLevels} below, so will be repeated for clarity. By a result from \cite{BK},
$$
e^{t\Tr (U-\Id)} = \sum_{\lambda} \mathrm{Prob}(X_i^{(N)}(t) = \lambda_i-i+1,1\leq i\leq N)\frac{\chi_{\la}(U)}{\dim\la}
$$
where $\chi_{\la}$ and $\dim\la$ are the character and dimension of the highest weight representation $\la$. 
Thus, by linearity, 
\begin{align*}
\state{Y}_t &= \sum_{\la} \mathrm{Prob}(X_i^{(N)}(t) = \lambda_i-i+1,1\leq i\leq N) \frac{\state{Y}_{\chi_{\la}}}{\dim\la}\\
&= \sum_{\la} \mathrm{Prob}(X_i^{(N)}(t) = \lambda_i-i+1,1\leq i\leq N)  p_Y(\la_1,\ldots,\la_N)\\
&= \sum_x \mathrm{Prob}(X_i^{(N)}(t) = x_i,1\leq i\leq N)\bar{p}_Y(x_1,\ldots,x_N) 
\end{align*}
The last line is simply the right--hand--side of the proposition.
\end{proof}

\begin{proposition}\label{OneLevel} Suppose that  $P_t$ and $Q_t$ are two semigroups which preserve $Z(\U)$ and satisfy Theorem \ref{QuantumTheorem}(1),
then $P_tX=Q_tX$ for all $X\in Z(\U)$. In particular, $P_t$ is the Markov operator of the process 
$(X_1^{(N)}(t)>\ldots>X_N^{(N)}(t))$.
\end{proposition}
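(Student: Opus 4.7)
The strategy is to extract uniqueness on $Z(\U)$ from the two--point formula of Theorem \ref{QuantumTheorem}(3), then convert the resulting moment identity into a classical statement about symmetric polynomials via the preceding proposition, and finally kill the difference using positivity of the state $\state{\cdot}_s$ together with a Zariski density argument. Note that Theorem \ref{QuantumTheorem}(3) is a consequence of (1) and (2), and (2) involves only the fixed data $\omega, j_n$ and the state $\state{\cdot}_{t_n}$ -- not the semigroup. Hence any semigroup satisfying (1) automatically satisfies
$$
\omega(j_n(X) j_m(Y)) = \state{X \cdot P_{t_m - t_n} Y}_{t_n}.
$$
Since the left--hand side is fixed by the dilation data, two such semigroups $P_t$ and $Q_t$ produce the same right--hand side. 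Restricting to $X, Y \in Z(\U)$, which is allowed because both operators preserve the centre, we obtain $\state{X (P_t Y - Q_t Y)}_s = 0$ for every $X \in Z(\U)$ and every $s, t \geq 0$.

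\textbf{Self--adjoint trick and classical conversion.} Write $Z := P_t Y - Q_t Y \in Z(\U)$. The $*$--operation preserves $Z(\U)$: applying $*$ to $XY = YX$ yields $Y^* X^* = X^* Y^*$, so $X \in Z(\U)$ implies $X^* \in Z(\U)$. Taking $X = Z^*$ in the moment identity gives $\state{Z^* Z}_s = 0$ for all $s \geq 0$. Because the Harish--Chandra isomorphism is a ring morphism and each $V_\la$ is unitary, $p_{Z^*}(\la) = \overline{p_Z(\la)}$, hence $\bar p_{Z^* Z}(x) = |\bar p_Z(x)|^2$ on real arguments. The preceding proposition then yields
$$
0 = \state{Z^* Z}_s = \E\bigl[|\bar p_Z(X^{(N)}(s))|^2\bigr],
$$
forcing $\bar p_Z$ to vanish on $\mathrm{supp}\bigl(\mathrm{law}(X^{(N)}(s))\bigr)$ for every $s > 0$.

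\textbf{Zariski density, conclusion, and the main obstacle.} As $s$ ranges over $(0,\infty)$, these supports exhaust all $(x_1 > \cdots > x_N) \in \Z^N$ with $x_i \geq -i+1$, because every such configuration is reachable from the densely packed initial condition with positive probability in sufficiently large time. This set is Zariski--dense in $\C^N$: fixing $(x_2,\ldots,x_N)$ in the appropriate shifted Weyl sublattice yields a univariate polynomial in $x_1$ with infinitely many integer zeros, hence identically zero, and induction on $N$ finishes the density claim. Therefore $\bar p_Z \equiv 0$, so $Z = 0$ and $P_t|_{Z(\U)} = Q_t|_{Z(\U)}$. I expect no substantial obstacle in the uniqueness argument itself; the only delicate step is the Zariski density verification. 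The ``in particular'' claim does however require identifying $P_t|_{Z(\U)}$ with the Charlier operator $Q_t^{(N)}$; the cleanest route is to bypass an abstract lift of $Q_t^{(N)}$ to all of $\U$ and argue directly that, by Theorem \ref{QuantumTheorem}(3) combined with the classical Markov property of $X^{(N)}$, both $\E\bigl[\bar p_X(X^{(N)}(s))\,\overline{P_tY}(X^{(N)}(s))\bigr]$ and $\E\bigl[\bar p_X(X^{(N)}(s))\,Q^{(N)}_t \bar p_Y(X^{(N)}(s))\bigr]$ compute $\omega(j_n(X)j_m(Y))$, after which the same Zariski density forces $\overline{P_t Y} = Q^{(N)}_t \bar p_Y$ as symmetric polynomials.
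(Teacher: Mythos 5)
Your argument for the uniqueness assertion ($P_t|_{Z(\U)} = Q_t|_{Z(\U)}$) is correct, but it takes a genuinely different route from the paper. The paper proves the implication ``$\state{Y}_s=0$ for all $s\geq0$ $\Rightarrow Y=0$'' for central $Y$ by a purely algebraic minimal--degree argument, using the explicit form $P_tY = Y + t(\text{lower degree})$ that follows from the structure of the generators $\Psi_k$; it never invokes positivity of the states or the correspondence with the Charlier process. You instead upgrade to the two--point identity via Theorem \ref{QuantumTheorem}(3), exploit $Z^*\in Z(\U)$ to get $\state{Z^*Z}_s=0$, push this through the preceding proposition to the classical expectation $\E[|\bar p_Z(X^{(N)}(s))|^2]=0$, and finish with Zariski density of the reachable set. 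The cost is that you need positivity, the $*$--structure, and the one--time classical correspondence as inputs; the benefit is that you entirely avoid tracking how $P_t$ acts on the $\Psi_k$ basis and the potentially delicate cancellation analysis in the paper's minimal--degree step. Both approaches are valid, and yours is cleaner at the level of the final ``kill'' step.

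There is, however, a genuine gap in your treatment of the ``in particular'' clause. You assert that \emph{both} $\E[\bar p_X(X^{(N)}(s))\,\bar p_{P_tY}(X^{(N)}(s))]$ and $\E[\bar p_X(X^{(N)}(s))\,(Q^{(N)}_t\bar p_Y)(X^{(N)}(s))]$ compute $\omega(j_n(X)j_m(Y))$. The first does, by (3) together with the preceding proposition. But the second equals $\E[\bar p_X(X^{(N)}(s))\,\bar p_Y(X^{(N)}(s+t))]$ by the classical Markov property, and equating this with $\omega(j_n(X)j_m(Y))$ is exactly the ($N_1=N_2=N$ case of) Theorem \ref{TwoLevels} — whose proof in the paper uses $P_t=Q_t^{(N)}$. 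So your argument is circular: to run the Zariski/positivity machine on $W:=\bar p_{P_tY}-Q^{(N)}_t\bar p_Y$ you need $\state{XW}_s=0$ for all central $X$, but all you can deduce from the available one--time correspondence is $\state{W}_s=0$. Your method only kills an element of $Z(\U)$ whose \emph{two--point} correlations vanish, whereas identifying $P_t$ with the Charlier operator requires a \emph{one--time} separation statement. This is precisely what the paper's minimal--degree argument supplies and your positivity argument cannot replace. To repair it, you would either have to prove the one--time implication ``$\state{W}_s=0$ $\forall s\Rightarrow W=0$'' independently (e.g., by the paper's route, or by showing that the polynomial $s\mapsto\state{W}_s$ determines $W$ via its leading coefficient), or establish the two--time expectation formula for the Charlier process from an external source rather than from the proposition under proof.
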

\begin{proof}
Theorem \ref{QuantumTheorem}(1) and (2) imply that $\state{P_tX}_s = \state{Q_tX}_s = \state{X}_{t+s}$ 
for all $s,t\geq 0$. In order to show $P_tX=Q_tX$, it suffices to show that if $Y\in Z$ 
satisfies $\state{Y}_t=0$ for all $t\geq 0$, then $Y=0$. Suppose this is not true, and let
$Y$ be a counterexample of minimal degree. But then $\state{P_tY}_s=\state{Y}_s=0$ and
Theorem \ref{Gelfand} implies that $ P_tY = Y + t(\text{lower degree terms})$. By assumption, 
$P_tY-Y\in Z$ also satisfies $\state{P_tY-Y}_s$ for all $s\geq 0$. Thus, since 
$Y$ is of minimal degree, $P_tY-Y=0$. 

If $Y$ has degree $d>1$, then by Theorem \ref{Gelfand} the term $E_{11}^d$ appears in $Y$. Thus
$P_tY$ has a $tdE_{11}^{d-1}$ term which cannot cancel with any term in $Y$. If $Y$ has degree
$1$ then $Y=a_1\Psi_1+a_0$ and $P_tY-Y = a_1tN$. Thus, there is a contradiction,
so no such $Y$ can exist. 

The second art of the proposition follows if we show that $Q_t$ preserves shifted symmetric polynomials.
But this follows because the process is the Doob $h$--transform of a random walk which is invariant under 
permuting the co--ordinates, and $h$ is anti--symmetric.
\end{proof}

\begin{example}
For $N=2$, one can explicitly compute (after a long calculation)
\begin{multline*}
P_t\Psi_4=\Psi_4 + 4t\Psi_3 + (6t^2+8t)\Psi_2 + 2t\Psi_1^2 \\
+ (4t^3+24t^2+10t)\Psi_1 + (2t^4+24t^3+38t^2+6t).
\end{multline*}
For instance, the only appearance of the monomial $E_{11}E_{22}$ in the right hand side is in $\Psi_1^2$. The only monomial in $\Psi_4$ that can lead to $E_{11}E_{22}$ is
$4E_{22}E_{21}E_{11}E_{12}$. The co--product $\Delta(E_{ij})=1\otimes E_{ij} + E_{ij}\otimes 1$ sends $E_{ij}$ either to the left tensor factor or the right tensor factor. In order to get $E_{11}E_{22}$, we must send $E_{22}E_{11}$ to the left and $E_{21}E_{12}$ to the right. Since $\state{E_{21}E_{12}}_t=t,$ the coefficient of $E_{22}E_{11}$ in $P_t\Psi_4$ must be $4t$. Since the coefficient of $E_{22}E_{11}$ in $\Psi_1^2$ is $2$, this implies that the coefficient of $\Psi_1^2$ in $\Psi_4$ is $2t$. Similar considerations can be applied to produce the other terms.

Now, evaluating this at $(4,2)$ with $t=3$ would predict that $$P_t\Psi_4(4,2)=257+12*65+78*17+6*5^2+354*5 + 1170=5453.$$
And indeed, the explicit determinantal formula from \cite{BF} for $N=2$ yields
$$
\frac{\sum_{b=x}^{\infty}\sum_{a=y}^b (b^k+(a-1)^k)(b-a+1)t^{b+a}\det\left[\begin{array}{cc}
(b-x)!^{-1} & (b-(y-1))!^{-1}  \\
(a-1-x)!^{-1} & (a-y)^{-1}  \end{array}\right]}{\sum_{b=x}^{\infty}\sum_{a=y}^b (b-a+1)t^{b+a}\det\left[\begin{array}{cc}
(b-x)!^{-1} & (b-(y-1))!^{-1}  \\
(a-1-x)!^{-1} & (a-y)^{-1}  \end{array}\right]}.
$$
A numerical computation at $(x,y)=(4,2),t=3,k=4$ with the sum from $b=x$ to $b=50\approx\infty$ yields
5452.999999999999999999999999999999418... Note that it is not obvious from the summation that the answer would even be an integer.
\end{example}

\begin{example}For general $N$, 
\begin{multline*}
P_t\Psi_1=\Psi_1+tN, \quad P_t\Psi_2=\Psi_2+2t\Psi_1+(t^2+Nt)N, \\
P_t\Psi_3=\Psi_3+3t\Psi_2+3(t^2+Nt)\Psi_1+N(t^3+3t^2N+\frac{1}{2}t(N^2+1)).
\end{multline*}
One can check explicitly that the semigroup property holds.
\end{example}

\begin{example}\label{P4} We wish to take asymptotics $N\approx \eta L$ and $t\approx \tau L$. We would get
\begin{align*}
P_t\Psi_{1,N} &= \Psi_1+\const, \\
P_t\Psi_{2,N} &=\Psi_2+2\tau L\Psi_1+\const, \\
P_t\Psi_{3,N} &= \Psi_3+3\tau L\Psi_2+3(\tau^2+\eta\tau)L^2\Psi_1+\const\\
P_t\Psi_{4,N} &= \Psi_4 + 4\tau L\Psi_3 + (6\tau^2+4\tau\eta)L^2\Psi_2 + (4\tau^3 + 12\tau^2\eta+2\tau\eta^2)L^3\Psi_1 \\
& \quad + 2\tau L \Psi_1^2 + \const\\
P_t\Psi_{1,N}^2 &= \Psi_{1,N}^2 + 2\eta\tau L^2\Psi_{1,N} + \const
\end{align*}
Again, one can check that the semigroup property holds.
\end{example}

\begin{theorem}\label{TwoLevels}
Suppose $Y_1\in Z(U(\mathfrak{gl}_{N_1})),\ldots, Y_r\in Z(U(\mathfrak{gl}_{N_r}))$ are mapped to the
symmetric polynomials $\bar{p}_{Y_1},\ldots\bar{p}_{Y_r}$ under the Harish--Chandra isomorphism. Assume
that $N_1\geq\ldots\geq N_r$ and $t_1\leq \ldots \leq t_r$. Then
$$
\state{Y_1P_{t_2-t_1}Y_2\cdots P_{t_r-t_1}Y_r}_{t_1}=\mathbb{E}\left[\bar{p}_{Y_1}(X^{(N_1)}(t_1))\cdots\bar{p}_{Y_r}(X^{(N_r)}(t_r))\right].
$$
\end{theorem}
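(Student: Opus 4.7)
The plan is to prove the identity by induction on $r$, with the base case $r=1$ being the proposition immediately preceding the theorem. For the inductive step I peel off the final factor $Y_r$ on both sides and match what remains.

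On the non-commutative side, iterating Theorem \ref{QuantumTheorem}(3) identifies the nested LHS with the dilation expectation $\omega(j_{t_1}(Y_1)\cdots j_{t_r}(Y_r))$, and a single application of Theorem \ref{QuantumTheorem}(1) to the outermost factor (together with the algebra morphism property of $j_{t_{r-1}}$) collapses this to $\omega(j_{t_1}(Y_1)\cdots j_{t_{r-1}}(\tilde Y))$, where $\tilde Y := Y_{r-1}\cdot P_{t_r-t_{r-1}}Y_r$ lies in $U(\mathfrak{gl}_{N_{r-1}})$ by Theorem \ref{QuantumTheorem}(5). On the classical side, the tower property together with the Markov property \eqref{Push} replaces $\bar{p}_{Y_r}(X^{(N_r)}(t_r))$ by $(Q^{(N_r)}_{t_r-t_{r-1}}\bar{p}_{Y_r})(X^{(N_r)}(t_{r-1}))$, and the Gibbs property \eqref{Gibbs} then lifts the evaluation to level $N_{r-1}$ through the intertwining Markov kernel $\Lambda$, reducing the $r$-fold expectation to an $(r-1)$-fold one in which $\bar{p}_{Y_{r-1}}$ is replaced by the symmetric polynomial $\tilde p := \bar{p}_{Y_{r-1}}\cdot \Lambda Q^{(N_r)}_{t_r-t_{r-1}}\bar{p}_{Y_r}$.

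The heart of the argument is identifying $\tilde Y$, which is not central in $U(\mathfrak{gl}_{N_{r-1}})$, with a central element $\tilde Z\in Z(U(\mathfrak{gl}_{N_{r-1}}))$ whose Harish--Chandra image is $\tilde p$. Using the $\mathfrak{gl}_{N_{r-1}}\downarrow\mathfrak{gl}_{N_r}$ branching rule, on each summand $V_\mu^{(N_r)}\subset V_\lambda^{(N_{r-1})}$ (whose multiplicity is the number of interlacing GT patterns from $\lambda$ to $\mu$), the element $\tilde Y$ acts as the scalar $p_{Y_{r-1}}(\lambda)\, p_{P_{t_r-t_{r-1}}Y_r}(\mu)$. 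Summing and dividing by $\dim V_\lambda^{(N_{r-1})}$, the average scalar on $V_\lambda^{(N_{r-1})}$ is precisely $\tilde p(\lambda-\rho_{N_{r-1}})$, by Proposition \ref{OneLevel} (identifying $P_{t_r-t_{r-1}}$ with $Q^{(N_r)}_{t_r-t_{r-1}}$ on the center) and the standard representation-theoretic formula for the Gibbs kernel $\Lambda$.

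The main obstacle is justifying that $\tilde Y$ may be replaced by $\tilde Z$ inside the dilation expectation $\omega(j_{t_1}(Y_1)\cdots j_{t_{r-1}}(\tilde Y))$: the difference $\tilde Y-\tilde Z$, while traceless on each $V_\lambda^{(N_{r-1})}$, is not in general annihilated by the outer operators $P_{t_i-t_{i-1}}$. The argument is that every preceding factor $j_{t_i}(Y_i)$ is built from an element central in $U(\mathfrak{gl}_{N_i})$ with $N_i\geq N_{r-1}$, so it acts diagonally on the $U(\mathfrak{gl}_{N_{r-1}})$-isotypic decomposition and produces no off-diagonal contribution that could pair nontrivially with the traceless block $j_{t_{r-1}}(\tilde Y-\tilde Z)$; concretely, expanding $\omega$ through the character formula of \cite{BK} for $\kappa_{t_1},\ldots,\kappa_{t_r-t_{r-1}}$ reduces every term to a trace on some $V_\lambda^{(N_{r-1})}$ of $\tilde Y-\tilde Z$ multiplied by a scalar, and so vanishes. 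Once this substitution is justified, the inductive hypothesis for $r-1$ factors (applied with $Y_{r-1}$ replaced by $\tilde Z$) matches the classical reduction exactly, completing the proof.
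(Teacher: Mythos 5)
Your approach is genuinely different from the paper's, and it is the right kind of argument, but the key step is under-justified as written.

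The paper proves the $r=2$ case and asserts the general case ``follows from exactly the same argument'': it handles $t_1=t_2$ first by iterating the Gibbs property and the branching rule, and then unequal times by expanding $P_{s-t}Y_2=\sum_\rho c_\rho Y_\rho$ in a basis of $Z(U(\mathfrak{gl}_M))$, applying the equal-time case, and pushing through with \eqref{Push}. You instead run an honest induction on $r$, peeling off $Y_r$ and then averaging $\tilde Y=Y_{r-1}\cdot P_{t_r-t_{r-1}}Y_r$ to a central element $\tilde Z$ whose Harish--Chandra image is $\bar p_{Y_{r-1}}\cdot\Lambda Q_{t_r-t_{r-1}}\bar p_{Y_r}$. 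Conceptually this is correct --- and it has the virtue of making explicit the induction the paper leaves implicit. Your preliminary step is also right: unwinding $\omega(j_1(Y_1)\cdots j_r(Y_r))$ via traciality and Theorem~\ref{QuantumTheorem}(1) produces the \emph{nested} expression $\state{Y_1 P_{t_2-t_1}(Y_2 P_{t_3-t_2}(\cdots))}_{t_1}$, which is what the theorem should be read as asserting (the notation $P_{t_r-t_1}$ in the statement is misleading; the flat product $Y_1(P_{t_2-t_1}Y_2)(P_{t_3-t_1}Y_3)$ already fails to match the classical side for $N_1=N_2=N_3=1$, $Y_j=E_{11}$).

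The gap is in justifying the replacement $\tilde Y\mapsto\tilde Z$ inside $\omega(j_1(Y_1)\cdots j_{r-1}(\,\cdot\,))$. Your argument --- that the preceding factors act ``diagonally on the $U(\mathfrak{gl}_{N_{r-1}})$-isotypic decomposition'' and therefore cannot pair with the traceless block --- does not directly deliver vanishing: the subalgebra $\mW_{r-2}$ that can multiply $j_{r-1}(\tilde Y-\tilde Z)$ inside $\omega$ is not generated by operators commuting with $U(\mathfrak{gl}_{N_{r-1}})$, and the quantity to be killed is a state on the infinite tensor product, not a trace on a single $V_\lambda^{(N_{r-1})}$. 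The clean route is different: observe that $X:=\tilde Y-\tilde Z$ has $\Tr(X\vert_{V_\mu^{(N_{r-1})}})=0$ for every $\mu$, hence by the branching rule $\Tr(X\vert_{V_\lambda^{(N_k)}})=0$ for every $k\le r-1$ and every $\lambda$; then use the identity $\state{P_tX}_{\chi_\lambda}=(\chi_\lambda\cdot\kappa_t)(X)=\sum_\nu c_\nu\state{X}_{\chi_\nu}$ to conclude that each $P_{t_j-t_{j-1}}$ in the nested expression preserves this trace-zero condition, and that the $Y_j$'s act by scalars on each $V_\lambda^{(N_j)}$, so $\state{Y_1 P_{t_2-t_1}(\cdots P_{t_{r-1}-t_{r-2}}X)}_{t_1}=0$. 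With this argument substituted for your hand-waving paragraph, the induction closes.
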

\begin{proof}
In order to simplify notation and elucidate the idea of the proof, assume $r=2$. The more general case follows from exactly the same argument.

First prove it for $t_1=t_2$. Assume $N_1=N\geq M=N_2$. Let $m(\la,\mu)$ denote the multiplicity of $\mu$ in the restricted representation $V_{\la}\Big|_{U(M)}$. Use $\bar{m}(\cdot,\cdot)$ to denote the same quantity in the shifted co--ordinates $x_i=\la_i-i+1$. Then by the Gibbs property, that is \eqref{Gibbs},
\begin{align*}
\mathrm{RHS} 
&= \sum_{x^{(N)},x^{(M)}} \mathrm{Prob}(X^{(N)}(t)=x^{(N)}_i,X^{(M)}=x^{(M)}_j) \bar{p}_{Y_1}(x^{(N)})\bar{p}_{Y_2}(x^{(M)})\\
&= \sum_{x^{(N)},x^{(M)}} \mathrm{Prob}(X^{(N)}(t)=x^{(N)}_i)\frac{\bar{m}(x^{(N)},x^{(M)})h(x^{(M)})}{h(x^{(N)})} \bar{p}_{Y_1}(x^{(N)})\bar{p}_{Y_2}(x^{(M)})\\
\end{align*}
At the same time, 
$$
\state{Y_1Y_2}_t = \sum_{\la^{(N)}} \mathrm{Prob}(X^{(N)}(t)=\la^{(N)}_i-i+1) \frac{1}{\dim \la^{(N)}} \mathrm{Tr}(Y_1Y_2\Big|_{V_{\la^{(N)}}}).
$$
Since $Y_1$ is central, it acts as $p_{Y_1}(\la^{(N)})\Id$ on $V_{\la^{(N)}}$, so this equals
$$
\sum_{\la^{(N)}} \mathrm{Prob}(X^{(N)}(t)=\la^{(N)}_i-i+1) \frac{p_{Y_1}(\la^{(N)})}{\dim \la^{(N)}}  \mathrm{Tr}(Y_2\Big|_{V_{\la^{(N)}}}).
$$
By restricting $V_{\la^{(N)}}$ to $U(M)$ and using that $Y_2$ acts as $p_{Y_2}(\la^{(M)})\Id$ on $V_{\la^{(M)}}$, we get
$$
\sum_{\la^{(N)},\la^{(M)}} \mathrm{Prob}(X^{(N)}(t)=\la^{(N)}_i-i+1) \frac{m(\la^{(N)},\la^{(M)})\dim\la^{(M)}}{\dim \la^{(N)}} p_{Y_1}(\la^{(N)})  p_{Y_2}(\la^{(M)}).
$$
This is equal to the right--hand--side from above.

Now consider when $t=t_1\leq t_2=s$. Write $P_{s-t}Y_2$ as a sum over basis elements, that is $P_{s-t}Y_2=\sum_{\rho} c_{\rho}Y_{\rho}$. Then
\begin{align*}
\state{Y_1P_{s-t}Y_2}_t &= \sum_{\rho} c_{\rho}\state{Y_1Y_{\rho}}_t \\
&= \sum_{\rho}c_{\rho}\mathbb{E}\left[\bar{p}_{Y_1}(X^{(N)}(t))\bar{p}_{Y_{\rho}}(X^{(M)}(t))\right]\\
&= \E\left[\bar{p}_{Y_1}(X^{(N)}(t))(P_{s-t}\bar{p}_{Y_2})(X^{(M)}(t))  \right]
\end{align*}
Thus, it suffices to prove that
$$
\E\left[\bar{p}_{Y_1}(X^{(N)}(t))\bar{p}_{Y_2}(X^{(M)}(s))  \right]=\E\left[\bar{p}_{Y_1}(X^{(N)}(t))(P_{s-t}\bar{p}_{Y_2})(X^{(M)}(t))  \right]
$$
We have
\begin{align*}
&\E\left[\bar{p}_{Y_1}(X^{(N)}(t))\bar{p}_{Y_2}(X^{(M)}(s))  \right]\\
&=\sum_{y^{(N)},x^{(M)}} \bar{p}_{Y_1}(y^{(N)})\bar{p}_{Y_2}(x^{(M)}) \mathbb{P}(X^{(M)}(s) = x^{(M)},X^{(N)}(t) = y^{(N)})\\
&=\sum_{y^{(N)},y^{(M)},x^{(M)}} \bar{p}_{Y_1}(y^{(N)})\bar{p}_{Y_2}(x^{(M)}) \mathbb{P}(X^{(M)}(s) = x^{(M)},X^{(N)}(t) = y^{(N)},X^{(M)}(t) = y^{(M)})\\
&=\sum_{y^{(N)},y^{(M)},x^{(M)}} \bar{p}_{Y_1}(y^{(N)})\bar{p}_{Y_2}(x^{(M)}) \mathbb{P}(X^{(M)}(s) = x^{(M)}\vert X^{(N)}(t) = y^{(N)},X^{(M)}(t) = y^{(M)})\\
& \quad \quad \quad \quad \quad \times\mathbb{P}( X^{(N)}(t) = y^{(N)},X^{(M)}(t) = y^{(M)})
\end{align*}
By \eqref{Push} and the fact that $P_t=Q_t$, this then equals
\begin{align*}
&=\sum_{y^{(N)},y^{(M)},x^{(M)}} \bar{p}_{Y_1}(y^{(N)})\bar{p}_{Y_2}(x^{(M)}) \mathbb{P}(X^{(M)}(s) = x^{(M)}\vert X^{(M)}(t) = y^{(M)})\\
&\quad \quad \quad \quad \quad  \times \mathbb{P}( X^{(N)}(t) = y^{(N)},X^{(M)}(t) = y^{(M)}))\\
&=\sum_{y^{(N)},y^{(M)}} \bar{p}_{Y_1}(y^{(N)})(P_{s-t}\bar{p}_{Y_2})(y^{(M)}) \mathbb{P}( X^{(N)}(t) = y^{(N)},X^{(M)}(t) = y^{(M)}))\\
&=\E\left[\bar{p}_{Y_1}(X^{(N)}(t))(P_{s-t}\bar{p}_{Y_2})(X^{(M)}(t))  \right]
\end{align*}

\end{proof}

We wrap up this section by giving an example showing that although $P_t=Q_t$ on $Z(\U)$, they are not equal on subalgebras
generated by different $Z(\U)$. The determinantal formula from \cite{BF} yields
$$
Q_1(\Psi_1^{(2)}\Psi_1^{(1)})(\lambda^{(2)},\lambda^{(1)})\approx 2.37\ldots, \text{ when } \lambda^{(2)}=(1,0),\lambda^{(1)}=(0).
$$
However, $$P_{t}(\Psi_1^{(2)}\Psi_1^{(1)})=\Psi_1^{(2)}\Psi_1^{(1)}+2t\Psi_1^{(1)}+t\Psi_1^{(2)}+2t^2+t,$$
and when evaluated at $\lambda^{(2)}=(1,0),\lambda^{(1)}=(0),t=1$ yields $3$.

\section{Covariance Structure}
In this section, it will be shown that the central elements are asymptotically Gaussian with an explicit 
covariance that generalizes the Gaussian free field. Let us review some previously known results.

\begin{theorem}\label{SpaceLikePath}
\cite{BB,BF} Suppose $N_j=\lfloor \eta_j L\rfloor, t_j=\tau_j L$ for $1\leq j\leq r$. Assume they lie on a \textit{space--like path}, that is $N_1\geq\ldots\geq N_r$ and $t_1\leq\ldots \leq t_r$. Then as $L\rightarrow\infty$,
$$
\left(\frac{\Psi_{k_1}^{(N_1)}-\state{\Psi_{k_1}^{(N_1)}}_{t_1}}{L^{k_1}},\ldots,\frac{P_{t_r-t_1}\Psi_{k_r}^{(N_r)}-\state{P_{t_r-t_1}\Psi_{k_r}^{(N_r)}}_{t_1}}{L^{k_r}}\right) \rightarrow (\xi_1,\ldots,\xi_r),
$$
where the convergence is with respect to the state $\state{\cdot}_{t_1}$, and $(\xi_1,\ldots,\xi_r)$ is a Gaussian vector with covariance
$$
\displaystyle\E[\xi_i\xi_j]= \left(\frac{1}{2\pi i}\right)^2\iint_{\vert z\vert>\vert w\vert}\limits (\eta_i z^{-1} + \tau_i + \tau_i z)^{k_i} (\eta_j w^{-1} + \tau_j + \tau_j w)^{k_j} (z-w)^{-2}dzdw.
$$
\end{theorem}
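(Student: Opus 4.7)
The plan is to reduce the non-commutative joint convergence to classical Gaussian fluctuations of power-sum linear statistics of the particle system from \cite{BF}, and then conclude by the method of cumulants. Combining Theorem \ref{TwoLevels} with Theorem \ref{Gelfand}: under the Harish--Chandra isomorphism, $\Psi_k^{(N)}$ maps to the power sum $p_k^{(N)}(x) = \sum_{m=1}^{N} x_m^k$, so for any space--like configuration of indices,
$$
\state{\prod_{i=1}^r P_{t_i - t_1}\Psi_{k_i}^{(N_i)}}_{t_1} = \E\left[\prod_{i=1}^r p_{k_i}^{(N_i)}(X^{(N_i)}(t_i))\right].
$$
Hence convergence under $\state{\cdot}_{t_1}$ of arbitrary polynomials in the left--hand--side quantities is equivalent to joint moment convergence of the centered, $L^{-k_i}$--scaled power--sum linear statistics of the space--like particle ensemble.

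Next I would invoke the determinantal structure of $\bigcup_i \{(t_i, N_i, X^{(N_i)}_m(t_i))\}_m$ along space--like paths established in \cite{BF}, together with its explicit double--contour--integral correlation kernel $K$. The standard cumulant formula for determinantal processes expresses the $n$--th joint cumulant of the linear statistics with weights $f_i(x) = x^{k_i}$ as a sum over cyclic permutations $\sigma$ of $\sum_{x_1,\ldots,x_n} f_{\sigma(1)}(x_1)\cdots f_{\sigma(n)}(x_n) K(\bullet_1,\bullet_2)\cdots K(\bullet_n,\bullet_1)$. Using the factors $z^{N_i}e^{t_i z}$ and $w^{-N_j}e^{-t_j w}$ built into $K$, each position summation $\sum_x x^{k_i} z^{-x}$ collapses by residues to a bulk factor $L^{k_i}(\eta_i z^{-1} + \tau_i + \tau_i z)^{k_i}$ in the scaling $N_i \sim \eta_i L$, $t_i \sim \tau_i L$. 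After normalization the two--point cumulant becomes a double contour integral, and the $(z-w)^{-2}$ singularity in the target formula emerges from the $(z-w)^{-1}$ factor of $K$ together with an additional derivative produced when the residue calculation is organized symmetrically for the covariance. Higher cumulants, built from cyclic products of three or more copies of $K$, carry only a single factor of $L$ from the residue evaluation while the normalization removes $L^{k_1 + \cdots + k_n}$; hence they are $O(L^{-1})$ and vanish.

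The main obstacle is the careful asymptotic extraction of the kernel: justifying the steepest--descent contour deformation so that only the bulk critical point contributes, verifying that the $(z-w)^{-2}$ coefficient comes out exactly as claimed, and providing uniform tail bounds that allow the exchange of summation and limit. Once these steps are in place, the method of cumulants yields joint Gaussian convergence with the stated covariance. The single--level case $r=1$ is carried out in \cite{BB}, and the general space--like case is the content of the main asymptotic theorem of \cite{BF}.
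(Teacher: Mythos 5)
The paper does not prove Theorem \ref{SpaceLikePath}; it cites it from \cite{BB,BF}, remarking only that the proof ``uses that the particle system is a determinantal point process along space--like paths.'' Your reduction to classical moments via Theorem \ref{TwoLevels} is the right bridge (and is indeed the role Theorem \ref{TwoLevels} plays in the paper), and the determinantal/cumulant route you then sketch is the methodology of \cite{BF}. So the overall strategy is legitimate and consistent with what the paper cites.

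Two cautions are worth recording. First, the displayed claim that the position sum $\sum_x x^{k_i}z^{-x}$ ``collapses by residues'' to $L^{k_i}(\eta_i z^{-1}+\tau_i+\tau_i z)^{k_i}$ is not correct as written: such a geometric-type sum produces rational functions of $z$ built from derivatives of $(1-w/z)^{-1}$, not a Laurent polynomial raised to the $k$-th power. The Laurent polynomial $(\eta z^{-1}+\tau+\tau z)^{k}$ is really encoding the image of $\Psi_k$ under the Harish--Chandra/$\bar p$--map together with the limit shape of the Charlier ensemble, and its derivation requires a genuine steepest-descent analysis of the kernel, not a single residue collapse; you have correctly flagged this as ``the main obstacle,'' but the shortcut you wrote down is wrong and should not be taken as the mechanism. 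Second, \cite{BB} gives a different route that never introduces the kernel at all: it works with the Schur generating function $e^{t\Tr(U-\Id)}$ directly and extracts joint cumulants combinatorially. That is actually the route the present paper relies on downstream (Theorem 5.1 of \cite{BB} is invoked to get Gaussianity for Theorem \ref{TimeLikePath}), and it is the more flexible one here precisely because the determinantal approach is structurally tied to space--like paths, which is the point the paper is trying to move past. For a proof that dovetails with the rest of the paper, the Schur-generating-function argument is the preferable choice.
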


The proof uses that the particle system is a determinantal point process along space--like paths. This
condition is necessary due to the construction using \eqref{CD}. In particular, there are no maps going up
from $S$ to $S^*$. A natural question is to ask what happens along time--like paths, that is, $N_1\leq N_2,t_1\leq t_2$. The main theorem
is
\begin{theorem}\label{TimeLikePath}
Suppose $N_j=\lfloor \eta_j L\rfloor, t_j=\tau_j L$ for $1\leq j\leq r$. Assume $\min(\ga_1,\ldots,\ga_r)=\ga_1$. Then as $L\rightarrow\infty$
$$
\left(\frac{\Psi_{k_1}^{(N_1)}-\state{\Psi_{k_1}^{(N_1)}}_{t_1}}{L^{k_1}},\ldots,\frac{P_{t_r-t_1}\Psi_{k_r}^{(N_r)}-\state{P_{t_r-t_1}\Psi_{k_1}^{(N_1)}}_{t_1}}{L^{k_r}}\right) \rightarrow (\xi_1,\ldots,\xi_r),
$$
where the convergence is with respect to the state $\state{\cdot}_{t_1}$, and $(\xi_1,\ldots,\xi_r)$ is a Gaussian vector with covariance
\[
\displaystyle\E[\xi_i\xi_j]= 
\begin{cases}
\displaystyle\left(\frac{1}{2\pi i}\right)^2\iint_{\vert z\vert>\vert w\vert}\limits (\eta_i z^{-1} + \tau_i + \tau_i z)^{k_i} (\eta_j w^{-1} + \tau_j + \tau_j w)^{k_j} (z-w)^{-2}dzdw,&\\
\hspace{3in} \eta_i\geq\eta_j,\ga_i\leq\ga_j&\\
\displaystyle\left(\frac{1}{2\pi i}\right)^2\iint_{\vert z\vert>\vert w\vert}\limits (\eta_j\frac{\tau_j}{\tau_i} z^{-1} + \tau_j + \tau_i z)^{k_j} (\eta_i w^{-1} + \tau_i + \tau_i w)^{k_i} (z-w)^{-2}dzdw,&\\
\hspace{3in} \eta_i<\eta_j,\ga_i\leq\ga_j&
\end{cases}
\]
\end{theorem}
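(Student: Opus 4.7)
The plan is to reduce all joint moments under $\omega$ to central products in a single state $\state{\cdot}_{t_1}$ via the dilation identity $\omega(j_i(X)j_j(Y))=\state{X\cdot P_{t_j-t_i}Y}_{t_i}$ of Theorem \ref{QuantumTheorem}(3), compute the limiting second moments in the two geometric regimes separately, and check Wick factorization of higher moments so the joint limit is Gaussian. When $\eta_i\geq\eta_j$ (with $\ga_i\leq\ga_j$) the pair $(i,j)$ itself forms a space-like pair, so the reduction lands us exactly in the setting of Theorem \ref{SpaceLikePath} and reproduces the first line of the formula verbatim. The genuinely new content is the regime $\eta_i<\eta_j$, $\ga_i\leq\ga_j$.

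For that regime I would unwind the state representation-theoretically. Both $\Psi_{k_i}^{(N_i)}$ and $P_{t_j-t_i}\Psi_{k_j}^{(N_j)}$ are central in their respective $U(\mathfrak{gl}_{N_*})$ by Theorem \ref{QuantumTheorem}(5), but the former only lies in a proper subalgebra of $U(\mathfrak{gl}_{N_j})$. Decomposing each irreducible $V_\la$ of $U(N_j)$ under $U(N_i)$ as $\oplus_\mu V_\mu^{m(\la,\mu)}$, the element $\Psi_{k_i}^{(N_i)}$ acts on each $V_\mu$ by the scalar $\bar p_{k_i}^{(N_i)}(\mu)$ while $P_{t_j-t_i}\Psi_{k_j}^{(N_j)}$ acts on $V_\la$ by $(Q_{t_j-t_i}\bar p_{k_j}^{(N_j)})(\la)$. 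Expanding $\state{\cdot}_{t_i}$ through its Peter--Weyl sum exactly as in the proof of Theorem \ref{TwoLevels} and inserting the Gibbs branching kernel $\Lambda(\la,\mu)=m(\la,\mu)\dim V_\mu/\dim V_\la$ converts the state into the classical expectation
\[
\state{\Psi_{k_i}^{(N_i)}\cdot P_{t_j-t_i}\Psi_{k_j}^{(N_j)}}_{t_i}=\E\bigl[\bar p_{k_i}^{(N_i)}(\widetilde X^{(N_i)}(t_i))\cdot (Q_{t_j-t_i}\bar p_{k_j}^{(N_j)})(X^{(N_j)}(t_i))\bigr],
\]
where $X^{(N_j)}(t_i)$ is the $N_j$-level Charlier distribution at time $t_i$ and $\widetilde X^{(N_i)}(t_i)$ is its Gibbs-interlaced sample. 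The pair $(X^{(N_j)}(t_i),\widetilde X^{(N_i)}(t_i))$ is a single space-like time slice, so its fluctuations are governed by the joint GFF limit furnished by Theorem \ref{SpaceLikePath}.

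To extract the stated integrand I would expand $Q_{t_j-t_i}\bar p_{k_j}^{(N_j)}$ about $X^{(N_j)}(t_i)$: its leading asymptotics, already visible in the semigroup calculations of Example \ref{P4}, are encoded by the generating function $(\eta_j z^{-1}+\tau_j+\tau_j z)^{k_j}$, which when paired against the GFF fluctuation at level $\eta_j L$, time $\ga_i L$ contributes the first factor of the integrand; $\bar p_{k_i}^{(N_i)}(\widetilde X^{(N_i)}(t_i))$ contributes $(\eta_i w^{-1}+\tau_i+\tau_i w)^{k_i}$, and the GFF covariance contributes $(z-w)^{-2}$. The asymmetric substitution $\eta_j\mapsto\eta_j\ga_j/\ga_i$ is produced by the change of variables that expresses the $(\eta_j L,\ga_j L)$ observable as a single-variable contour integral in the Gaussian field living at the earlier time $\ga_i L$. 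Higher joint moments then reduce inductively by iterating the same dilation-plus-branching step, producing mixed moments in a jointly Gaussian family for which Wick factorization is automatic. The main obstacle I anticipate is this final change-of-variables identification: since $P_t\Psi_k^{(N)}$ genuinely contains nonlinear cross terms such as $\Psi_1^2$ (cf.\ Example \ref{P4}), one must track their fluctuation scaling carefully to ensure that only the claimed asymmetric kernel $(\eta_j\ga_j/\ga_i\, z^{-1}+\tau_j+\tau_i z)^{k_j}$ survives in the limit and that no residual Gaussian contribution arises from the interaction between the deterministic Charlier evolution over $[t_i,t_j]$ and the initial Gibbs fluctuations at time $t_i$.
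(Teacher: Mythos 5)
Your setup matches the paper's: the Gaussianity follows from Theorem 5.1 of \cite{BB} together with Theorem~\ref{QuantumTheorem}(5), and the covariance computation reduces, via Theorem~\ref{QuantumTheorem}(3), to evaluating $\state{\Psi_{k_i}^{(N_i)}\cdot P_{t_j-t_i}\Psi_{k_j}^{(N_j)}}_{t_i}$, which is a single-time (hence space-like) slice. The $\eta_i\geq\eta_j$ case is immediate. So far so good.

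However, you stop precisely where the proof actually begins. The entire difficulty is to determine what $P_{t_j-t_i}\Psi_{k_j}^{(N_j)}$ does to the covariance kernel in the fluctuation limit, and your proposal replaces that computation with the assertion that ``the asymmetric substitution $\eta_j\mapsto\eta_j\ga_j/\ga_i$ is produced by the change of variables that expresses the observable as a single-variable contour integral.'' That is a restatement of the conclusion, not an argument. The paper fills this gap concretely: it first shows (the proposition before Theorem~\ref{TimeLikePath}) that $P_{t_j-t_i}\Psi_k^{(N)}$ expands, modulo constants and lower-order terms in the fluctuation scaling, as a linear combination $\sum_l c_{kl}(\tau_j,\tau_i,\eta)\,\Psi_l$; here one must genuinely control the nonlinear terms such as $\Psi_1^2$ via the weight estimate $\state{\Psi_\rho}_t=\Theta(L^{\wt(\rho)})$ and the linearization $\Psi_{\rho_1}\cdots\Psi_{\rho_l}-\state{\cdots}\approx\sum_j\prod_{i\neq j}\state{\Psi_{\rho_i}}(\Psi_{\rho_j}-\state{\Psi_{\rho_j}})$, which is exactly the ``main obstacle'' you flag but do not resolve. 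Second, the coefficients $c_{kl}$ are pinned down by matching the \emph{negative} Fourier modes against Theorem~\ref{SpaceLikePath}, and then a short combinatorial identity (the $\epsilon_j\mapsto -\epsilon_j$ flip together with $\chi(-\epsilon)=(\eta/\tau)^{\epsilon}\chi(\epsilon)$) converts the negative-mode identity into the \emph{positive}-mode identity, which is what produces the asymmetric kernel $(\eta_j\tfrac{\tau_j}{\tau_i}z^{-1}+\tau_j+\tau_i z)^{k_j}$. Without this last step there is no derivation of the claimed formula --- it is not forced by Gaussianity plus the space-like case alone, since those only determine the covariance on the space-like cone.
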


\begin{example}\label{Ex}
The double integral can be computed using resides and the Taylor series 
$$(z-w)^{-2}=z^{-2}\left(1 + 2\frac{w}{z} + 3\frac{w^2}{z^2} + \ldots\right).$$
So for instance,
\begin{align*}
&\state{ \left(\frac{\Psi_{1}^{(\eta_1 L)}-\state{\Psi_{1}^{(\eta_1 L)}}_{\ga_1 L}}{L^{1}}\cdot\frac{P_{(\ga_2-\ga_1)L}\Psi_{1}^{(\eta_2 L)}-\state{P_{(\ga_2-\ga_1)L}\Psi_{1}^{(\eta_2 L)}}_{\ga_1 L}}{L^{1}}\right) }_{\ga_1 L}\\
&\rightarrow \ga_1\min(\eta_1,\eta_2).
\end{align*}
This can be checked using Proposition \ref{Formula}. Assume without loss of generality that $\eta:=\eta_1\leq \eta_2$ and set $\ga=\ga_1$. Since $\state{E_{ii}E_{jj}}= \state{E_{ii}}\state{E_{jj}}$ for $i\neq j$, then

\begin{align*}
&  \quad \lim_{L\rightarrow\infty}  L^{-2}\state{\left(\sum_{i=1}^{\lfloor \eta_1 L \rfloor } (E_{ii}-\ga_1 L)\right)\left(\sum_{j=1}^{\lfloor \eta_2 L\rfloor} (E_{jj} - \ga_1 L)\right)}_{\ga_1 L}\\
& =\lim_{L\rightarrow\infty}  L^{-2}\state{\left(\sum_{i=1}^{\lfloor \eta_1 L \rfloor } E_{ii}-\eta_1  \ga_1 L^2\right)\left(\sum_{j=1}^{\lfloor \eta_1 L\rfloor} E_{jj} - \eta_1 \ga_1 L^2\right)}_{\ga_1 L}\\
&= \lim_{L\rightarrow\infty}  \frac{\left( \eta L(\tau^2L^2 + \tau L)  +\eta L(\eta L - 1)(\tau L)^2 -2\eta\tau\cdot\eta\tau L^4 + \eta^2\tau^2 L^4 \right)}{L^2}\\
&= \eta\tau
\end{align*}
\end{example}

The remainder of this section will prove Theorem \ref{TimeLikePath}. By Theorem 5.1 of \cite{BB} and the
fact that $P_t$ preserves the centre, it is immediate that convergence to a Gaussian vector holds. It only 
remains to compute the covariance.

From the presence of $\Psi_1^2$ in $P_t\Psi_4$, it is necessary to understand products of $\Psi_k$. Heuristically, 
if $\Psi_1 \approx cL^2 + \xi L$, where $\xi$ is a Gaussian random variable, then
$\Psi_1^2 \approx c^2L^4+2c\xi L^3$. Here are two examples which demonstrate this:
\begin{example}
Since
$$
\lim_{L\rightarrow\infty}L^{-2}\state{\Psi_1^{(\eta L)}}_{\tau L} = (\tau \eta -\frac{1}{2} \eta^2),
$$
the heuristics would predict that
\begin{align*}
& \quad \quad \lim_{L\rightarrow\infty} \state{\frac{\left(\Psi_1^{(\eta L)}-\state{\Psi_1^{(\eta L)}}_{\tau L}\right)\left(\left[\Psi_1^{(\eta L)}\right]^2-\state{\left[\Psi_1^{(\eta L)}\right]^2}_{\tau L}\right)}{L^4}}_{\tau L} \\
&=\left(2\tau \eta - \eta^2\right)\lim_{L\rightarrow\infty} \state{\frac{\left(\Psi_1^{(\eta L)}-\state{\Psi_1^{(\eta L)}}_{\tau L}\right)\left(\Psi_1^{(\eta L)}-\state{\Psi_1^{(\eta L)}}_{\tau L}\right)}{L^2}}_{\tau L}\\
&= \left(2\tau \eta - \eta^2\right)\eta \tau 
\end{align*}
And indeed, an explicit calcuation yields
\begin{align*}
&\quad \lim_{L\rightarrow\infty} L^{-4}\state{\left(\sum_{i=1}^{\eta L} E_{ii}-\eta\tau L^2\right)\sum_{j,k=1}^{\eta L} E_{jj}E_{kk} + \left(\sum_{i=1}^{\eta L} E_{ii} - \eta\tau L^2 \right)(-\eta^2 L^2)\sum_{j=1}^{\eta L} E_{jj}}_{\tau L}\\
&=\lim_{L\rightarrow\infty} L^{-4}\Big((\eta \tau L^2 + 3L^4\eta^2\tau^2 + \eta^3\tau^3 L^6)-\eta\tau L^2(\eta^2\tau^2L^4 + \eta \tau L^2)\\
& \quad \quad - (\eta^2L^2)(\eta^2\tau^2 L^4 + \eta \tau L^2 - \eta\tau L^2\cdot\eta\tau L^2\Big)\\
&= (2\tau \eta-\eta^2)\eta\tau.
\end{align*}
\end{example}

\begin{example} Consider Theorem \ref{SpaceLikePath} with $r=2,k_1=3,k_2=4$. Using the formula for $P_t \Psi_4$ from Example \ref{P4} and 
replacing $\Psi_1^2$ with  $(2\tau_1 \eta_2-\eta_2^2)\Psi_1$ yields 
\begin{align*}
&\ \quad 12\eta_2\tau_1^2\tau_2(\eta_2^2\tau_1 + \tau_1(3\eta_2\tau_2 + 2\eta_2^2) + \tau_2(\tau_2+3\eta_2)(\tau_1+\eta_1))\\
&=12\eta_2\tau_1^3(\eta_2^2\tau_1 + \tau_1(3\eta_2\tau_1 + 2\eta_2^2) + \tau_1(\tau_1+3\eta_2)(\tau_1+\eta_1))\\
&+ 4(\tau_2-\tau_1)\cdot 3\eta_2\tau_1^2(\eta_2^2\tau_1+6\eta_2\tau_1^2+3\tau_1(\eta_2+\tau_1)(\eta_1+\tau_1))\\
&+(6(\tau_2-\tau_1)^2+4(\tau_2-\tau_1)\eta_2)\cdot 6\eta_2\tau_1^2(\tau_1(\tau_1+\eta_1)+\eta_2\tau_1)\\
&+(4(\tau_2-\tau_1)^3+12(\tau_2-\tau_1)^2\eta_2+2(\tau_2-\tau_1)\eta_2^2)\cdot 3\eta_2\tau_1^2(\tau_1+\eta_1)\\
&+2(\tau_2-\tau_1)\cdot (2\tau_1 \eta_2-\eta_2^2)\cdot 3\eta_2\tau_1^2(\tau_1+\eta_1),
\end{align*}
which can be checked computationally.
\end{example} 

Given a partition $\rho=(\rho_1,\ldots,\rho_l)$, let its \textit{weight} $\wt(\rho)$ denote 
$\left| \rho \right| + l(\rho)=\rho_1+\ldots+\rho_l+l$, and let
$\Psi_{\rho}=\prod_{i=1}^l \Psi_{\rho_i}$. In the asymptotic limit, we should be able 
to replace $\Psi_{\rho}$ with a linear combination of $\Psi_{\rho_i}$. In the examples above,
$\Psi_1^2$ was replaced with $(2\tau \eta - \eta^2)\Psi_1$.

\begin{proposition} Let $\eta,\tau>0$ be fixed.
(1) Set $N=\lfloor \eta L\rfloor $ and $t=\tau L$. Then $\state{\Psi_{\rho,N}}_{t} =\Theta( L^{\wt(\rho)})$.

(2) There exist constants $c'_{k,\rho}(\tau,\eta)$ such that 
$$
P_{\tau L}\Psi_{k,N} = \sum_{\rho} (c'_{k,\rho}(\tau,\eta)+o(1))L^{k+1-\wt(\rho)}\Psi_{\rho}.
$$
where the sum is over $\rho$ with weight $\wt(\rho)\leq k+1$.

(3) For any $\tau_1>\tau_0,$ there exist constants $c_{kj}(\tau_1,\tau_0,\eta)$ such that
\begin{multline*}
\lim_{L\rightarrow\infty} \state{\frac{\Psi_m - \state{\Psi_m}_{\tau_0 L}}{L^m} \cdot \frac{P_{(\tau_1-\tau_0) L}\Psi_k 
-\state{P_{(\tau_1-\tau_0) L}\Psi_k}_{(\tau_1-\tau_0) L}}{L^k}}_{\tau_0 L} \\
= \lim_{L\rightarrow\infty}  \sum_{j=1}^k c_{kj}(\tau_1,\tau_0,\eta)\state{\frac{\Psi_m - \state{\Psi_m}_{\tau_0 L}}{L^m} \cdot \frac{\Psi_j - \state{\Psi_j}_{\tau_0 L}}{L^j}}_{\tau_0 L}
\end{multline*}
\end{proposition}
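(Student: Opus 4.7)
The plan is to prove (1) by translating to the Charlier process via the Harish--Chandra isomorphism, prove (2) by combining the centrality of $P_t$ with a direct coproduct expansion of $P_{\tau L}\Psi_{k,N}$, and deduce (3) from the expansion in (2) together with the asymptotic Gaussianity of the centered $\Psi_j$'s established in Theorem~\ref{SpaceLikePath}. For (1), $\Psi_{\rho,N}\in Z(\U)$ maps under Harish--Chandra to the product of power sums $p_\rho(x)=\prod_i\sum_{m=1}^N x_m^{\rho_i}$, and the proposition immediately preceding Proposition~\ref{OneLevel} gives $\state{\Psi_\rho}_{\tau L}=\E[p_\rho(X^{(N)}(\tau L))]$. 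With $N=\lfloor\eta L\rfloor$ and densely-packed initial data, each coordinate of the Charlier process has expectation of order $L$ and fluctuations of order $\sqrt L$, so $\E[p_{\rho_i}(X^{(N)}(\tau L))]=\Theta(NL^{\rho_i})=\Theta(L^{\rho_i+1})$; multiplying over $i$ and controlling the cross-covariances via the Gaussianity of the fluctuations gives $\Theta(L^{\wt(\rho)})$.

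For (2), since $P_t$ preserves $Z(\U)$ (Theorem~\ref{QuantumTheorem}(5)) and $\{\Psi_j\}_{j\geq 1}$ generates $Z(\U)$ as an algebra (Theorem~\ref{Gelfand}), $P_{\tau L}\Psi_{k,N}$ admits a unique expansion $\sum_\rho c_{k,\rho}(\tau L,N)\Psi_\rho$. The restriction $|\rho|\leq k$ is immediate because $P_t=(\mathrm{id}\otimes\state{\cdot}_t)\Delta$ does not increase filtration degree and $\Psi_\rho$ has degree $|\rho|$. To pin down the $L$-scaling of each coefficient, I would expand $\Delta\Psi_{k,N}$ using the path description $\Psi_k=\sum_m\sum_{\pi\in\Pi_k^{(m)}}E(\pi)$ together with $\Delta E_{ij}=E_{ij}\otimes 1+1\otimes E_{ij}$, split each resulting monomial as $E_S\otimes E_{K\setminus S}$, and evaluate the right tensor factor via Proposition~\ref{Formula}. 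Each Faa di Bruno block on the right contributes a factor of $\tau L$, and each surviving free diagonal summation index on the left contributes a factor of $N=\eta L$, producing the form $c_{k,\rho}(\tau L,N)=(c'_{k,\rho}(\tau,\eta)+o(1))L^{k+1-\wt(\rho)}$. A consistency check: applying $\state{\cdot}_{\tau_0 L}$ to both sides forces the right side to equal $\state{\Psi_k}_{(\tau_0+\tau)L}=\Theta(L^{k+1})$, ruling out by (1) any surviving $\rho$ with $\wt(\rho)>k+1$.

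For (3), substitute the expansion from (2) into the covariance and divide by $L^{m+k}$. Writing $\bar\Psi_j=\Psi_j-\state{\Psi_j}_{\tau_0 L}$ and expanding
\[
\Psi_\rho=\prod_i\state{\Psi_{\rho_i}}_{\tau_0 L}\cdot 1+\sum_i\Bigl(\prod_{i'\neq i}\state{\Psi_{\rho_{i'}}}_{\tau_0 L}\Bigr)\bar\Psi_{\rho_i}+R_\rho,
\]
where $R_\rho$ collects products of $\geq 2$ centered factors, the constant term drops out of the covariance with $\bar\Psi_m$, and the linear-in-fluctuation term produces $\sum_i(\prod_{i'\neq i}\state{\Psi_{\rho_{i'}}}_{\tau_0 L})\cov_{\tau_0 L}(\Psi_m,\Psi_{\rho_i})$. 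The contribution from $R_\rho$ satisfies $\cov_{\tau_0 L}(\Psi_m,R_\rho)=o(L^{m+\wt(\rho)-1})$: by Theorem~\ref{SpaceLikePath} applied to central elements (a valid space--like family because $P_t$ preserves $Z(\U)$), the family $\{\bar\Psi_j/L^j\}$ converges jointly to a centered Gaussian vector, so odd joint moments vanish in the limit and even joint moments factor by Wick, making the $R_\rho$ terms subleading. Collecting contributions by $\rho_i=j$ yields
\[
c_{kj}(\tau_1,\tau_0,\eta)=\sum_\rho c'_{k,\rho}(\tau_1-\tau_0,\eta)\sum_{i:\rho_i=j}\prod_{i'\neq i}C_{\rho_{i'}}(\tau_0,\eta),
\]
where $C_l(\tau_0,\eta)=\lim_L L^{-l-1}\state{\Psi_l}_{\tau_0 L}$, which is manifestly independent of $m$ as required.

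The main obstacle will be the bookkeeping in part (2): while the upper bound $\wt(\rho)\leq k+1$ drops out cleanly from degree counting plus (1), matching the explicit Faa di Bruno path contributions with the $\Psi_\rho$ basis is a nontrivial combinatorial exercise, as witnessed by the calculations for $P_t\Psi_3$ and $P_t\Psi_4$ in the preceding examples. A secondary technical point in (3) is that the $o(1)$ bound on $\cov_{\tau_0 L}(\Psi_m,R_\rho)$ requires joint moment convergence (not merely convergence of covariances) of the $\bar\Psi_j/L^j$; this does follow from the method-of-moments argument underlying Theorem~\ref{SpaceLikePath} but must be invoked explicitly.
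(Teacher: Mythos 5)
Your parts (2) and (3) match the paper's proof in structure: in (2) the paper likewise reduces to the consistency check of applying $\state{\cdot}_L$ and invoking (1), and in (3) the paper likewise linearizes $\Psi_\rho-\state{\Psi_\rho}_{\tau_0L}$ around the product of the one--factor means, absorbing all products of two or more centered factors into ``smaller order terms''; your explicit formula for $c_{kj}$ and your observation that controlling $\cov(\Psi_m,R_\rho)$ requires \emph{joint} moment convergence (not merely covariance convergence) are useful articulations of points the paper leaves implicit. Your part (1), however, takes a genuinely different route. The paper stays on the $\U$ side: it expands $\Psi_\rho$ in the path basis, counts the $\binom{N}{\nu}=\Theta(L^\nu)$ tuples of paths covering $\nu$ distinct vertices, estimates $\state{E(\pi_1)\cdots E(\pi_l)}_{\tau L}$ via the simple--cycle decomposition and Proposition~\ref{Formula} (using $s_j=\rho_j-\nu_j+1$ from elementary graph theory), and gets the lower bound from the constant term of $\Psi_\rho$. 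You instead pass through the Harish--Chandra isomorphism to the Charlier process and argue that $\E[p_\rho(X^{(N)})]$ factorizes into a product of single--factor expectations up to lower--order cumulants controlled by the CLT. This is exactly the alternative the author alludes to with ``This can be proved from \cite{BB}, but this will be an alternative proof''; the combinatorial version buys self--containment and yields the upper bound without invoking joint Gaussianity. A caveat shared by both arguments: the claimed $\Theta$ lower bound can degenerate at special parameter values --- e.g.\ $L^{-2}\state{\Psi_1}_{\tau L}\to\eta\tau-\tfrac{1}{2}\eta^2$ vanishes at $\tau=\eta/2$ --- so your step $\E[p_{\rho_i}]=\Theta(L^{\rho_i+1})$ should really be read as an $O$--bound with generically nonzero leading coefficient, which is the same level of rigor as the paper's one--line constant--term argument.
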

\begin{proof}
(1) This can be proved from \cite{BB}, but this will be an alternative proof.

By definition,
$$
\Psi_{\rho} = \sum_{m_1=1}^{\rho_1}\cdots \sum_{m_l=1}^{\rho_l} \sum_{\pi_1 \in \Pi_{\rho_1}^{(m_1)}}\cdots \sum_{\pi_l \in \Pi_{\rho_l}^{(m_l)}} E(\pi_1)\cdots E(\pi_l).
$$
Consider the sum over $l$--tuples $(\pi_1,\ldots,\pi_l)$ such that the paths $\pi_1,\ldots,\pi_l$ 
cross over a total of exactly $\nu$ distinct vertices.
There are $\binom{N}{\nu}=\Theta(L^{\nu})$ such $l$--tuples, so it
remains to estimate $\state{E(\pi_1)\cdots E(\pi_l)}_{\tau L}$. Let $\pi$ be 
the union of the paths $\pi_1,\ldots,\pi_l$. Decompose $\pi$ into the union of
$s$ simple cycles. By Proposition \ref{Formula}, $\state{E(\pi_1)\cdots E(\pi_l)}_{\tau L}=\bigO{L^s}$.
Decomposing $\pi_j$ into $s_j$ simple cycles, it is clear that
$s=s_1+\ldots+s_l$. If $\pi_j$ covers exactly $\nu_j$ vertices, then
elementary graph theory gives $s_j=\rho_j-\nu_j+1$. Since
$\nu_1+\ldots+\nu_l\geq \nu$, thus
$$
\state{\Psi_{\rho,N}}_{t} = \bigO{L^{\nu}L^{s_1+\ldots+s_l}}=\bigO{L^{\rho_1+\ldots+\rho_l+l}}.
$$

To get a lower bound, just
observe that the constant term in $\Psi_{\rho,N}$ is $\Theta(L^{\wt(\rho)})$.

(2) By Theorem \ref{QuantumTheorem}(5), $P_{\tau L}\Psi_{k}$ can expressed as a linear combination of $\Psi_{\rho}$. Taking
$\state{\cdot}_{L}$ and using that $\state{P_{\tau L}X}_L = \state{X}_{(1+\tau)L}$, it follows from (1) that only $\wt(\rho)\leq k+1$ terms 
have nonzero coefficieints.

(3) First apply (2) to the left--hand--side. Then, by (1), 
\begin{multline*}
\Psi_{\rho_1}\ldots \Psi_{\rho_l} - \state{\Psi_{\rho_1}\ldots \Psi_{\rho_l}}_{\tau_0 L} \\
= \sum_{j=1}^l \state{\Psi_{\rho_1}}_{\tau_0 L} \ldots \widehat{\state{\Psi_{\rho_j}}_{\tau_0 L}} \ldots \state{\Psi_{\rho_l}}_{\tau_0 L}\left(\Psi_{\rho_j}-\state{\Psi_{\rho_j}}_{\tau_0 L}\right)
+ \text{smaller order terms}
\end{multline*}
\end{proof}

Given a Laurent polynomial $p(w)$, let $p(w)[w^r]$ denote the cofficient of $w^r$ in $p(w)$. Using the expansion $(z-w)^{-2}=z^{-2}(1+2(w/z)+3(w/z)^2+\ldots)$ in Theorem \ref{SpaceLikePath} and taking residues, one obtains
$$
\sum_{l=1}^k c_{kl}(\tau_2,\tau_1,\eta_2)(\eta_2 w^{-1} + \tau_1 + \tau_1 w)^{l}[w^r] = (\eta_2 w^{-1} + \tau_2 + \tau_2 w)^k[w^r], r\leq -1.
$$
For example, for $k=3$ and $r=-1$, and using the expansion of $P_t\Psi_3$, this says
\begin{equation}\label{SpaceLikeExample}
1\cdot (3\eta_2^2\tau_1 + 3\eta_2\tau_1^2) + 3(\tau_2-\tau_1)\cdot 2\eta_2\tau_1 + 3((\tau_2-\tau_1)^2 + (\tau_2-\tau_1)\eta_2)\cdot\eta_2 = 3\eta_2^2\tau_2 + 3\eta_2\tau_2^2.
\end{equation}
We need a formula for $r\geq 1$. Theorem \ref{TimeLikePath} follows from the proposition below. 

\begin{proposition}
For $r\geq 1,$
$$
\sum_{l=1}^k c_{kl}(\tau_2,\tau_1,\eta_1)(\eta_1 z^{-1} + \tau_1 + \tau_1 z)^l[z^r] = (\eta_1\frac{\tau_2}{\tau_1}z^{-1} + \tau_2 + \tau_1 z)^k [z^r].
$$
\end{proposition}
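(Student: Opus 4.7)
The plan is to derive the proposition directly from the space-like identity displayed immediately above it, by exhibiting an involution under which the polynomial $A(w) := \eta_1 w^{-1} + \tau_1 + \tau_1 w$ is symmetric but $B(w) := \eta_1 w^{-1} + \tau_2 + \tau_2 w$ transforms into the polynomial on the right-hand side of the proposition. Concretely, set $\phi(w) := \eta_1/(\tau_1 w)$. A short direct calculation gives
$$
A(\phi(w)) = \tau_1 w + \tau_1 + \eta_1 w^{-1} = A(w), \qquad B(\phi(w)) = \tau_1 w + \tau_2 + \eta_1\tfrac{\tau_2}{\tau_1} w^{-1},
$$
and the latter is precisely the RHS polynomial $\eta_1\tfrac{\tau_2}{\tau_1} z^{-1} + \tau_2 + \tau_1 z$ appearing in the proposition.

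Next, I apply the space-like identity already displayed in the paper, which is valid for any value of the level parameter since it comes from Theorem~\ref{SpaceLikePath}; taking the level parameter to be $\eta_1$, one has
$$
\sum_{l=1}^k c_{kl}(\tau_2,\tau_1,\eta_1)\, A(w)^l[w^s] \;=\; B(w)^k[w^s], \qquad s \le -1.
$$
For any Laurent polynomial $p(w) = \sum_s p_s w^s$ the substitution $w \mapsto \phi(w)$ sends $w^s$ to $(\eta_1/\tau_1)^s w^{-s}$, so $p(\phi(w)) = \sum_s p_s(\eta_1/\tau_1)^s w^{-s}$. Applied to the $\phi$-invariant Laurent polynomial $A(w)^l$ this yields the coefficient symmetry $A(w)^l[w^{-s}] = (\eta_1/\tau_1)^s\, A(w)^l[w^s]$, and applied to $B(w)^k$ it gives $B(w)^k[w^{-s}] = (\eta_1/\tau_1)^s\, B(\phi(w))^k[w^s]$.

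Substituting these two relations into the space-like identity with $s = -r$, $r \ge 1$, and dividing through by the nonzero common factor $(\eta_1/\tau_1)^r$, one obtains
$$
\sum_{l=1}^k c_{kl}(\tau_2,\tau_1,\eta_1)\, A(w)^l[w^r] \;=\; B(\phi(w))^k[w^r], \qquad r \ge 1,
$$
which by the first step is exactly the statement of the proposition. The only place demanding care is the bookkeeping of the factors $(\eta_1/\tau_1)^s$ when switching between the positive-power and negative-power coefficient-extraction conventions; beyond that the proof is essentially a one-line symmetry observation layered on top of the $s \le -1$ identity already in the paper, and I do not anticipate any substantial obstacle.
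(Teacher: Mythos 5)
Your proof is correct and is essentially the paper's own argument, just repackaged: the substitution $\phi(w)=\eta_1/(\tau_1 w)$ on the Laurent variable is exactly the reflection $\epsilon_j\mapsto-\epsilon_j$ on the tuples $\vec\epsilon\in S^{(r)}_l$ that the paper uses, with the monomial weights $(\eta_1/\tau_1)^{\epsilon_j}$ accounting for the factor $(\eta_1/\tau_1)^r$. Your formulation via the $\phi$-invariance of $A$ and the identity $B\circ\phi=$ RHS polynomial is a somewhat cleaner way to express the same symmetry, but there is no genuinely new idea relative to the paper.
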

\begin{proof}
We start with an illustrative example. For $k=3$ and $r=1$ we would want to show
\begin{equation}\label{AlreadyKnow}
1\cdot (3\eta_1\tau_1^2 + 3\tau_1^3) + 3(\tau_2-\tau_1)\cdot 2\tau_1^2 + 3((\tau_2-\tau_1)^2 + (\tau_2-\tau_1)\eta_1)\cdot\tau_1 = 3\eta_1\tau_1\tau_2 + 3\tau_1\tau_2^2.
\end{equation}
This can be checked directly, but in general the coefficients $c_{kl}$ are difficult to work with. 
Instead, we would like to show that it follows directly from the covariance formula along space--like paths.
Indeed, this can be done just by multiplying \eqref{SpaceLikeExample} by $(\tau_1/\eta_1)^r$. (And recall that
$\eta_2<\eta_1$ in \eqref{SpaceLikeExample} while $\eta_1<\eta_2$ in \eqref{AlreadyKnow}).

Let $S_l^{(r)} = \{ (\epsilon_1,\ldots,\epsilon_l)\in \{-1,0,+1\}^l: \epsilon_1 + \ldots + \epsilon_l=r\}$ and define
$$
\chi(j) = 
\begin{cases}
\eta_1, j=-1\\
\tau_1, j=0\\
\tau_1, j=1
\end{cases}
\quad
\chi'_{\text{ti}}(j) = 
\begin{cases}
\eta_1\frac{\tau_2}{\tau_1}, j=-1\\
\tau_2, j=0\\
\tau_1, j=1
\end{cases}
\quad
\chi'_{\text{sp}}(j)=
\begin{cases}
\eta_1, j=-1\\
\tau_2, j=0\\
\tau_2, j=1
\end{cases}
$$
With this notation, what we want to show is that
\begin{equation}\label{WTS}
\sum_{l=1}^k c_{kl}(\tau_2,\tau_1,\eta_1)\sum_{\vec{\epsilon}\in S_l^{(r)}} \prod_{j=1}^l \chi(\epsilon_j)
= \sum_{\vec{\epsilon'} \in S_k^{(r)}} \prod_{j=1}^k \chi'_{\text{ti}}(\epsilon_j'), r\geq 1.
\end{equation}
From \eqref{AlreadyKnow},
$$
\sum_{l=1}^k c_{kl}(\tau_2,\tau_1,\eta_1)\sum_{\vec{\epsilon}\in S_l^{(-r)}} \prod_{j=1}^l \chi(\epsilon_j)
= \sum_{\vec{\epsilon'} \in S_k^{(-r)}} \prod_{j=1}^k \chi'_{\text{sp}}(\epsilon_j'), r\geq 1.
$$
By sending $\epsilon_j\mapsto -\epsilon_j$, this is equivalent to
$$
\sum_{l=1}^k c_{kl}(\tau_2,\tau_1,\eta_1)\sum_{\vec{\epsilon}\in S_l^{(r)}} \prod_{j=1}^l \chi(-\epsilon_j)
= \sum_{\vec{\epsilon'} \in S_k^{(r)}} \prod_{j=1}^k \chi'_{\text{sp}}(-\epsilon_j'), r\geq 1.
$$
And since for all $r$,
$$
\sum_{l=1}^k c_{kl}(\tau_2,\tau_1,\eta_1)\sum_{\vec{\epsilon}\in S_l^{(r)}} \prod_{j=1}^l \chi(-\epsilon_j)=
\left(\frac{\eta_1}{\tau_1} \right)^r \sum_{l=1}^k c_{kl}(\tau_2,\tau_1,\eta_1) \sum_{\vec{\epsilon}\in S_l^{(r)}} \prod_{j=1}^l \chi(\epsilon_j)
$$
it thus follows that the left--hand--side of \eqref{WTS} equals
$$
\left(\frac{\tau_1}{\eta_1} \right)^r\sum_{\vec{\epsilon'} \in S_k^{(r)}} \prod_{j=1}^k \chi'_{\text{sp}}(-\epsilon_j'), r\geq 1.
$$
So it suffices to show that
$$
\left(\frac{\tau_1}{\eta_1}\right)^r \prod_{j=1}^k  \chi'_{\text{sp}}(-\epsilon_j') = \prod_{j=1}^k \chi_{\text{ti}}'(\epsilon_j'), \quad \text{ for all } \vec{\epsilon}'\in S_k^{(r)}, r\geq 1.
$$
Since $r=\left| \{\epsilon_j=1\}\right| - \left| \{\epsilon_j=-1\} \right|,$ it follows that the left--hand--side is
$$
\left(\frac{\tau_1}{\eta_1}\right)^r \cdot \eta_1^{\left| \epsilon_j=1\right|} \tau_2^{\left| \epsilon_j=0\right|} \tau_2^{\left| \epsilon_j=-1\right|} = \tau_1^r \eta_1^{\left| \epsilon_j=-1\right|} \tau_2^{\left| \epsilon_j=0\right|} \tau_2^{\left| \epsilon_j=-1\right|}.
$$
And similarly, the right--hand--side is
$$
\tau_1^{\left| \epsilon_j=1\right|} \tau_2^{\left| \epsilon_j=0\right|} \left(\eta_1\frac{\tau_2}{\tau_1}\right)^{\left| \epsilon_j=-1\right|}
=\tau_1^r \eta_1^{\left| \epsilon_j=-1\right|} \tau_2^{\left| \epsilon_j=0\right|} \tau_2^{\left| \epsilon_j=-1\right|}.
$$
\end{proof}

The formula in Theorem \ref{TimeLikePath} appears to be different from the formula in \cite{Bo}. In particular, the covariance along space--like paths is different from the covariance along time--like paths.
However, after rescaling from Brownian Motion to Ornstein--Uhlenbeck, i.e. replacing $\ga_i,\ga_j$ with $e^{2\ga_i},e^{2\ga_j}$ and multiplying by $e^{-\ga_j k_j}e^{-\ga_i k_i}$, the formula becomes
\[
\displaystyle\E[\xi_i\xi_j]= 
\begin{cases}
\displaystyle-\frac{1}{\pi}\frac{e^{\ga_j}}{e^{\ga_i}}\iint_{\vert z\vert>\vert w\vert}\limits (\eta_i z^{-1} + e^{\tau_i} + z)^{k_i} (\eta_j w^{-1} + e^{\tau_j} + w)^{k_j} (\frac{e^{\ga_j}}{e^{\ga_i}}z-w)^{-2}dzdw,&\\
\hspace{3in} \eta_i\geq\eta_j,\ga_i\leq\ga_j&\\
\displaystyle-\frac{1}{\pi}\frac{e^{\ga_j}}{e^{\ga_i}}\iint_{\vert z\vert>\vert w\vert}\limits (\eta_jz^{-1} + e^{\tau_j} + z)^{k_j} (\eta_i w^{-1} + e^{\tau_i} + w)^{k_i} (\frac{e^{\ga_j}}{e^{\ga_i}}z-w)^{-2}dzdw,&\\
\hspace{3in} \eta_i<\eta_j,\ga_i\leq\ga_j&
\end{cases}
\]
In both expressions, the $z$--contour is larger and corresponds to the higher level ($\eta_i$ in the first
case and $\eta_j$ in the second). Hence, by switching the subscripts $i$ and $j$ in $\eta$, the formula is the same in 
both cases. It also matches the formula in \cite{Bo} with the expression $e^{\ga_j-\ga_i}$ playing the 
role of $c(t_p,t_q)$.

%\section{Studying how $P$ behaves}
%How do we compute the co--variances of power sum polynomials
%corresponding to different times? It is not at all obvious.
%Start with examining what was done in \cite{BB} in the current
%context.

%There, $X-\state{X}_n$ and $Y-\state{Y}_n$ were considered as
%elements in the ambient space $U(\mathfrak{gl}(\infty))$, and
%the multiplication was considered there. By the results above,
%we have
%$$
%\state{( X -\state{X}_n ) ( Y- \state{Y}_n )}_n = \omega \left(
%(j_nX-\omega(j_nX) )(j_nY-\omega(j_nY)) \right).
%$$
%Now replace $j_nY$ with $j_mY$ in the right--hand--side. Then
%\begin{align*}
%& \omega \left( (j_nX-\omega(j_nX) )(j_mY-\omega(j_mY)) \right)
%= \omega(j_n(X)j_m(Y)) - \omega(j_n(X))\omega(j_m(Y)) \\
%&= \state{XP^{m-n}Y}_n - \state{X}_n\state{Y}_m =
%\state{XP^{m-n}Y}_n - \state{X}_n\state{P^{m-n}Y}_n \\
%&= \state{ (X-\state{X}_n)(P^{m-n}(Y)-\state{P^{m-n}(Y)}_n
%}_n
%\end{align*}

%\section{Multiple Levels}
%The map $P:U(\mathfrak{gl}(N))\rightarrow U(\mathfrak{gl}(N))$
%is a non--commutative Markov kernel. We note that its
%restriction to $U(\mathfrak{gl}(N-1))$ is also a Markov kernel,
%in other words $PU(\mathfrak{gl}(N-1))\subseteq
%U(\mathfrak{gl}(N-1)).$

\bibliographystyle{alpha}

\end{document}